
\documentclass[12pt]{amsart}

\usepackage{amsmath,amssymb,amsthm,enumerate}
\usepackage{ytableau}
\usepackage[margin=1in]{geometry}

\synctex=1

\numberwithin{equation}{section}

\DeclareMathOperator{\bs}{bs}
\DeclareMathOperator{\Ht}{ht}

\newcommand{\calS}{\mathcal{S}}

\newtheorem{theorem}{Theorem}[section]

\newtheorem{corollary}[theorem]{Corollary}
\newtheorem{proposition}[theorem]{Proposition}
\newtheorem{lemma}[theorem]{Lemma}

\numberwithin{theorem}{section}
\title[Divisibility of character values of the symmetric group]{Divisibility of character values of the symmetric group by prime powers}
\author{Sarah Peluse and Kannan Soundararajan}

\address{Department of Mathematics, Stanford University, Stanford, CA, USA}
\email{speluse@stanford.edu}

\address{Department of Mathematics, Stanford University, Stanford, CA, USA}
\email{ksound@stanford.edu}
\dedicatory{In memory of Chandra Sekhar Raju}

\begin{document}

\begin{abstract}
  Let $k$ be a positive integer. We show that, as $n$ goes to infinity, almost every entry
  of the character table of $S_n$ is divisible by $k$. This proves a conjecture of Miller.
\end{abstract}
\maketitle
\section{Introduction}\label{sec:intro}
It is a standard fact that the irreducible characters of $S_n$ take only integer values
for every natural number $n$. In 2017, Miller~\cite{Miller2017} computed the character
tables of $S_n$ for all $n\leq 38$ and looked at various statistical properties of these
integers as $n$ grew. His computations suggested that
\begin{enumerate}
\item the density of even entries tended to $1$ as $n$ tended to infinity,
\item the density of entries divisible by $3$, the density of entries divisible by $5$,
  and the density of entries divisible by $7$ increase with $n$,
\item about half of the nonzero entries were positive,
\item and the density of zeros in the character table decreased as $n$ grew,
  but not very quickly.
\end{enumerate}
Based on this first observation, Miller~\cite{Miller2017,Miller2019} conjectured that as
$n$ goes to infinity, almost every entry of the character table of the symmetric group
$S_n$ is even. Following partial progress due to McKay~\cite{McKay1972},
Gluck~\cite{Gluck2019}, Ganguly, Prasad and Spallone~\cite{GPS20}, and Morotti~\cite{Morotti2020}, the first author proved this
conjecture in~\cite{Peluse2020}. Based on the second observation, Miller~\cite{Miller2017,Miller2019} also conjectured,
more generally, that for any fixed prime $p$, almost every entry of the character table of
$S_n$ is a multiple of $p$ as $n$ goes to infinity. We proved this conjecture
in~\cite{PeluseSoundararajan2022}, with a uniform upper bound for the number of entries
not divisible by a fixed prime. Recently, Miller~\cite{Miller2019ii} conjectured, even
more generally, that for any fixed prime power $q$, almost every entry of the character
table of $S_n$ is a multiple of $q$ as $n$ goes to infinity. In this paper, we prove this
most general of Miller's conjectures.

\begin{theorem}\label{thm:main}
  Let $n$ be large and $q\leq 10^{-3}\log{n}/(\log\log{n})^2$ be a prime power. The
  number of entries in the character table of $S_n$ that are not divisible by $q$ is at most
  \begin{equation*}
    O\left(p(n)^2\exp(-(\log \log n)^2) \right).
  \end{equation*}
\end{theorem}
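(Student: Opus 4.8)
Throughout, $\lambda$ denotes a partition of $n$ indexing an irreducible character and $\mu$ a partition of $n$ indexing a conjugacy class (cycle type); write $q=p^a$. The plan is to produce a set of ``typical'' columns $\mu$, comprising all but at most $p(n)\exp(-(\log\log n)^2+o((\log\log n)^2))$ of them, together with, for each typical $\mu$, a set of ``typical'' rows $\lambda$ of comparably small complement, such that $p^a\mid\chi^\lambda(\mu)$ whenever both $\lambda$ and $\mu$ are typical. Bounding the contribution of atypical $\mu$ (at most $p(n)$ entries each) and, for typical $\mu$, of atypical $\lambda$, then yields the theorem.

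\smallskip\noindent\emph{The probabilistic skeleton.} Fix $T:=C_0\log n$ with $C_0$ a large absolute constant. A uniformly random $\mu\vdash n$ has, with probability $1-O(\exp(-(\log\log n)^2))$, at least $\sqrt n/(m(\log n)^{2})$ parts equal to $m$ for every $m\le T$; this is an Erd\H{o}s--Lehner/Fristedt-type concentration estimate applied for each $m\le T$ followed by a union bound, and the tail bounds available are far stronger than what is needed. Dually, a random $\lambda\vdash n$ has, with the same quality of probability, a \emph{balanced} $m$-quotient for every $m\le T$: writing the $m$-quotient of $\lambda$ as $(\lambda^{(0)},\dots,\lambda^{(m-1)})$, each $\lambda^{(i)}$ has $\asymp n/m^{2}$ boxes, with largest row and largest column of size at most $\sqrt n(\log n)^{2}$, and the $m$-core of $\lambda$ has at most $\sqrt n$ boxes. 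These follow from Fristedt-type concentration applied runner-by-runner on the $m$-abacus, again uniformly over $m\le T$. One also records the elementary stability fact that deleting $O(\sqrt n(\log n)^{2})$ boxes from such a $\lambda$ leaves these quotient statistics essentially unchanged.

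\smallskip\noindent\emph{The engine.} The tool is the iterated Murnaghan--Nakayama rule in its bundled, $m$-quotient form: removing all $a_m$ border strips of size $m$ from $\lambda$ (where $a_m$ is the number of parts of $\mu$ equal to $m$) amounts to deleting a total of $a_m$ boxes from the $m$-quotient, distributed as $(r_0,\dots,r_{m-1})$ among the components, and the resulting contribution carries the multinomial coefficient $\binom{a_m}{r_0,\dots,r_{m-1}}$ times a product $\prod_i f^{\lambda^{(i)}/\rho^{(i)}}$ of counts of standard skew Young tableaux. Peeling off the parts of $\mu$ of size at most $m$ in increasing order gives the identity
\[
\chi^\lambda(\mu)=\sum_{\rho}\chi^{\lambda/\rho}(\mu_{\le m})\,\chi^{\rho}(\mu_{>m}),
\]
where $\mu_{\le m}$, resp.\ $\mu_{>m}$, collects the parts of $\mu$ of size $\le m$, resp.\ $>m$, the sum is over $\rho\vdash n-|\mu_{\le m}|$, and $\chi^{\lambda/\rho}$ is the skew character. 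It therefore suffices to choose $m$ (of order $\log n$, with $m\le T$) so that $p^a\mid\chi^{\lambda/\rho}(\mu_{\le m})$ for \emph{every} $\rho$ occurring on the right-hand side.

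\smallskip\noindent\emph{The arithmetic core, and the main obstacle.} For typical $\mu$ one has $|\mu_{\le m}|\asymp m\sqrt n$, so for $m$ a sufficiently large multiple of $\log n$ this exceeds the combined size of the largest $\sqrt n(\log n)^{2}$ rows, and likewise of the largest $\sqrt n(\log n)^{2}$ columns, of a typical $\lambda$. Consequently \emph{every} admissible skew shape $\lambda/\rho$ is forced to spread over more than $\sqrt n(\log n)^{2}$ rows and more than $\sqrt n(\log n)^{2}$ columns. The aim is then to deduce that every border-strip tableau of shape $\lambda/\rho$ and content $\mu_{\le m}$ contains, for at least one small part size $i$, at least $p^{a}$ mutually \emph{independent} border strips of size $i$ -- independent in the sense that these $p^{a}$ strips may be freely permuted among their positions without affecting validity or sign. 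Such a configuration yields a sign-preserving free action of $\Z/p^{a}\Z$ on the set of tableaux, hence $p^{a}\mid\chi^{\lambda/\rho}(\mu_{\le m})$; here one uses that $\mu$ has $\gg\log n\ge p^{a}$ parts of each small size, which is ample since $q\le 10^{-3}\log n/(\log\log n)^{2}$. Making ``independent'' precise and -- crucially -- verifying that such strips exist for \emph{all} $\rho$ simultaneously is the heart of the argument: one must rule out degenerate tableaux in which the small strips are so rigidly stacked (in narrow vertical or horizontal runs, or along thick staircase regions) that no nontrivial permutation of their removal labels is legal. This is exactly where the balanced-quotient genericity of $\lambda$ enters decisively, and this step is where I expect essentially all of the technical difficulty to lie. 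An alternative organization worth pursuing is a descent on $a$: peel off a block $\mu'$ of small parts for which \emph{every} skew value $\chi^{\lambda/\rho}(\mu')$ is divisible by $p$ -- a skew analogue of the prime case of \cite{PeluseSoundararajan2022} -- and recurse with $a-1$ on $\chi^{\rho}(\mu\setminus\mu')$. The remaining ingredients -- the union bounds over $m\le T$, the bookkeeping of the two exceptional sets, and the passage from a single prime power to the statement of the theorem -- are routine.
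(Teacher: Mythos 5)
Your proposal is a strategy outline rather than a proof, and the decisive step is exactly the one you leave open. Everything hinges on the claim that, for a suitable block $\mu_{\le m}$ of small parts, one has $p^{a}\mid\chi^{\lambda/\rho}(\mu_{\le m})$ for \emph{every} partition $\rho$ appearing in the expansion $\chi^{\lambda}(\mu)=\sum_{\rho}\chi^{\lambda/\rho}(\mu_{\le m})\,\chi^{\rho}(\mu_{>m})$. The mechanism you propose for this -- that every border-strip tableau of shape $\lambda/\rho$ and content $\mu_{\le m}$ contains $p^{a}$ mutually ``independent'' strips of a common size, yielding a sign-preserving free action of $\mathbb{Z}/p^{a}\mathbb{Z}$ -- is asserted, not established, and you say yourself that making it precise and verifying it for all $\rho$ simultaneously is where essentially all the difficulty lies. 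That is a genuine gap, not a routine verification: the sum over $\rho$ cannot be restricted to ``typical'' $\rho$, the skew shapes $\lambda/\rho$ can be highly degenerate even when $\lambda$ has balanced $m$-quotients (e.g.\ the relevant strips can be forced into rigid stacked or staircase configurations in which no relabelling of strips is legal), and the balanced-quotient property of $\lambda$ is not of the hereditary type that survives the removal of the earlier strips in a tableau, so there is no induction to run. Nothing in the proposal quantifies or rules out these degenerate tableaux, and it is not at all clear that the claim is even true in the generality you need it.

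For contrast, the paper obtains the prime-power divisibility by a different and more structured route, and its key lemmas are precisely the kind of uniform statements your sketch lacks. First, a congruence modulo $p^{r}$ (replacing $p^{r}$ equal parts of $\mu$ of size $m$ by $p^{r-1}$ parts of size $pm$) is used to replace $\mu$ by a partition $\widetilde{\mu}$ having $r$ distinct \emph{very large} parts, each of multiplicity at least $p^{r-1}$; second, for $\lambda$ in a set of almost full density one only needs that $\lambda$ is a $t$-core for the finitely many relevant values $t=\sum_i k_i m_i$, a condition which (crucially) is inherited by every partition obtained along the way when border strips are removed (Lemma~\ref{lem:pm-1cores}); third, each of the $r$ stages of strip removal contributes one factor of $p$, because the sign is constant over all removal sequences between a fixed initial and final partition (Lemma~\ref{lem6.1}) and the number of such sequences is a multiple of $p$ (Lemma~\ref{lem6.2}, resting on a multinomial count and on $p\mid f_{\pi/\tau}$ for skew shapes of size $p^{t}$ that are not border strips). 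Your free-action heuristic is an attempt to get all of $p^{a}$ in one stroke at the level of individual tableaux, which is much stronger than what the paper proves and is unsupported here; your alternative ``descent on $a$'' remark is closer in spirit to the paper's stage-by-stage extraction of factors of $p$, but it is not carried out. As it stands, the proposal does not constitute a proof of the theorem.
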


It follows immediately from Theorem~\ref{thm:main} and the union bound that almost every
entry of the character table of $S_n$ is divisible by any fixed positive integer as $n$ goes to infinity.

\begin{corollary}\label{cor:main}
  Let $k$ be any positive integer. Then, as $n$ goes to infinity, the proportion of entries in the
  character table of $S_n$ that are not divisible by $k$ tends to $0$.
\end{corollary}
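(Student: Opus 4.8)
The plan is to deduce Corollary~\ref{cor:main} from Theorem~\ref{thm:main} by a straightforward union bound over the prime power factors of $k$, using that the total number of entries in the character table of $S_n$ is exactly $p(n)^2$. Fix a positive integer $k$, and write its prime factorization as $k = q_1 q_2 \cdots q_r$, where each $q_i = \ell_i^{a_i}$ is a prime power (the $\ell_i$ distinct primes, $a_i \geq 1$). An entry $\chi_\lambda(\mu)$ of the character table fails to be divisible by $k$ only if it fails to be divisible by at least one of the $q_i$: indeed, if $q_i \mid \chi_\lambda(\mu)$ for every $i$, then since the $q_i$ are pairwise coprime their product $k$ also divides $\chi_\lambda(\mu)$. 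Hence the set of "bad" entries for $k$ is contained in the union over $i$ of the sets of bad entries for $q_i$.

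Now observe that $k$ is fixed while $n \to \infty$, so for every sufficiently large $n$ each $q_i$ satisfies the hypothesis $q_i \leq k \leq 10^{-3}\log n/(\log\log n)^2$ of Theorem~\ref{thm:main}. Applying the theorem to each $q_i$ and summing, the number of entries of the character table of $S_n$ not divisible by $k$ is at most
\begin{equation*}
  \sum_{i=1}^{r} O\!\left(p(n)^2 \exp(-(\log\log n)^2)\right) = O_k\!\left(p(n)^2 \exp(-(\log\log n)^2)\right),
\end{equation*}
where the implied constant may now depend on $k$ (equivalently on $r$), which is harmless since $k$ is fixed. Dividing by the total number of entries $p(n)^2$, the proportion of entries not divisible by $k$ is $O_k(\exp(-(\log\log n)^2))$, which tends to $0$ as $n\to\infty$.

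There is essentially no obstacle here: the only point requiring a moment's care is the reduction from divisibility by $k$ to simultaneous divisibility by the prime power components $q_i$, which is exactly the Chinese Remainder Theorem / coprimality statement used above, and the observation that "$n$ large" in Theorem~\ref{thm:main} can be taken uniformly once $k$ (and hence all the $q_i$) is fixed. One could alternatively phrase the corollary with the explicit quantitative bound $O_k(p(n)^2\exp(-(\log\log n)^2))$ on the count of bad entries, but the stated qualitative form follows immediately.
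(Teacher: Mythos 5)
Your proof is correct and matches the paper's (implicit) argument exactly: the paper states that the corollary "follows immediately from Theorem~\ref{thm:main} and the union bound," which is precisely your decomposition of $k$ into pairwise coprime prime powers followed by a union bound over those factors. No issues.
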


Our methods do not seem to shed any light on Miller's third and fourth observations. Most
interesting to us is the question of what proportion of character table entries are zero,
and recent large scale simulations of Miller and Scheinerman \cite{MS23} suggest that the proportion of zeros tends to $0$ as 
$n$ tends to infinity. Combining the Murnaghan--Nakayama rule and an old result of
Erd\H{o}s and Lehner~\cite{ErdosLehner1941} on the distribution of the largest part of a
uniformly random partition of $n$ produces a proportion of about $\frac{2}{\log{n}}$ zeros in
the character table of $S_n$, and no lower bound of a larger order of
magnitude seems to be known.  In the related setting of finite simple groups of Lie type, Larsen and
Miller~\cite{LarsenMiller2021} have shown that almost every character table entry is zero
as the rank goes to infinity.

\medskip

\noindent {\bf Acknowledgments.}  The first author was partially supported by the NSF
Mathematical Sciences Postdoctoral Research Fellowship Program under Grant No. DMS-1903038
and by the Oswald Veblen Fund.  The second author is partially supported by a grant from
the National Science Foundation, and a Simons Investigator Grant from the Simons
Foundation.   We thank David Speyer for drawing our attention to Lemma~\ref{lem:combiningparts}.   We are grateful to the referee for a 
careful reading, and for pointing out the reference \cite{MO88}, which contained our Lemma~\ref{lem6.1}. 

\section{Proof outline}\label{sec:outline}

For any partitions $\lambda$ and $\mu$ of $n$, let $\chi^\lambda_\mu$ denote the value of
the irreducible character of $S_n$ corresponding to $\lambda$ on the conjugacy class of
elements with cycle type corresponding to $\mu$. In~\cite{PeluseSoundararajan2022}, our
argument proceeded by combining two key facts: (i) if $\mu$ contains a part substantially
larger than the typical largest part of a random partition, then $\chi^{\lambda}_\mu=0$
for almost every $\lambda$, and (ii) if $\nu$ is another partition of $n$ that is obtained
from $\mu$ by combining $p$ parts of the same size $m$ into one part of size $pm$, then
$\chi^{\lambda}_{\mu}\equiv \chi^{\lambda}_{\nu}\pmod{p}$ for every $\lambda$. We showed
that, for almost every $\mu$, repeatedly combining $p$ parts of the same size in this
manner produces a partition $\widetilde{\mu}$ containing a very large part, large enough
so that $\chi^{\lambda}_{\widetilde{\mu}}$ must be zero for almost every $\lambda$. Our
main result on the divisibility of character values by primes then followed from the fact
that $\chi^\lambda_{\mu}\equiv\chi^{\lambda}_{\widetilde{\mu}}\pmod{p}$ for every
$\lambda$.

The second key fact generalizes to a congruence of character value modulo prime powers in
a straightforward manner.
\begin{lemma}\label{lem:combiningparts}
Let $p^r$ be a power of the prime $p$. Suppose that $\mu$ is a partition of $n$, and that
$\nu$ is another partition of $n$ obtained from $\mu$ by replacing $p^r$ parts of the same
size $m$ by $p^{r-1}$ parts of size $pm$. Then for all partitions $\lambda$ of $n$, we have
\begin{equation*}
 \chi^\lambda_\mu\equiv\chi^\lambda_{\nu}\pmod{p^r}.
\end{equation*}
\end{lemma}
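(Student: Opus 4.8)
The plan is to lift the argument used for the prime case to the ring $\Lambda_{\Z}$ of symmetric functions with integer coefficients. Recall the Frobenius characteristic identity $p_\mu=\sum_\lambda \chi^\lambda_\mu s_\lambda$, where $p_\mu=\prod_i p_{\mu_i}$ is the power-sum symmetric function and $s_\lambda$ is the Schur function; since the $s_\lambda$ form a $\Z$-basis of $\Lambda_{\Z}$ and the $\chi^\lambda_\mu$ are precisely their coordinates, it suffices to show that $p_\mu-p_\nu\in p^r\Lambda_{\Z}$, as comparing Schur coefficients then forces $p^r\mid(\chi^\lambda_\mu-\chi^\lambda_\nu)$ for every $\lambda$. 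Writing $\mu=(\rho,m^{p^r})$ and $\nu=(\rho,(pm)^{p^{r-1}})$ for the common collection $\rho$ of remaining parts, we have $p_\mu=p_\rho\,p_m^{p^r}$ and $p_\nu=p_\rho\,p_{pm}^{p^{r-1}}$, so it is enough to prove $p_m^{p^r}\equiv p_{pm}^{p^{r-1}}\pmod{p^r\Lambda_{\Z}}$ and then multiply through by $p_\rho\in\Lambda_{\Z}$.

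The starting point is the Frobenius congruence $p_{pm}\equiv p_m^p\pmod{p\Lambda_{\Z}}$: reducing modulo $p$ in $\Z[x_1,x_2,\dots]$, the pure terms in the multinomial expansion of $(\sum_i x_i^m)^p$ cancel against $\sum_i x_i^{pm}$, while the remaining terms carry coefficients divisible by $p$, so $(\sum_i x_i^m)^p\equiv\sum_i x_i^{pm}\pmod p$. To upgrade this to a congruence modulo $p^r$, I would use the following elementary fact, valid in any commutative ring $R$: if $a\equiv b\pmod{pR}$, then $a^{p^j}\equiv b^{p^j}\pmod{p^{j+1}R}$ for all $j\geq 0$. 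This follows by induction on $j$ from the claim that $u\equiv v\pmod{p^sR}$ forces $u^p\equiv v^p\pmod{p^{s+1}R}$: writing $u=v+p^sw$ and expanding by the binomial theorem, the term indexed by $i$ with $1\leq i\leq p$ is a multiple of $\binom{p}{i}p^{si}$, hence of $p^{s+1}$ — directly when $i=1$, and because $si\geq 2s\geq s+1$ makes $p^{si}$ divisible by $p^{s+1}$ when $i\geq 2$.

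Applying this fact in $R=\Lambda_{\Z}$ with $a=p_{pm}$, $b=p_m^p$, and $j=r-1$, the Frobenius congruence becomes $p_{pm}^{p^{r-1}}\equiv(p_m^p)^{p^{r-1}}=p_m^{p^r}\pmod{p^r\Lambda_{\Z}}$, which is exactly what was needed; multiplying by $p_\rho$ gives $p_\mu\equiv p_\nu\pmod{p^r\Lambda_{\Z}}$, and reading off Schur coefficients yields $\chi^\lambda_\mu\equiv\chi^\lambda_\nu\pmod{p^r}$ for every partition $\lambda$ of $n$.

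I do not anticipate a genuine obstacle here: this is a routine strengthening of the prime case of \cite{PeluseSoundararajan2022}, with the elementary binomial lemma doing all of the extra work (note that naively iterating the mod-$p$ statement $p^{r-1}$ times only recovers a congruence modulo $p$, so some such strengthening is essential). The one point demanding care is integrality — the entire argument must be carried out inside $\Lambda_{\Z}$ rather than merely $\Lambda_{\QQ}$, so that $p^r$ genuinely divides the \emph{integer} $\chi^\lambda_\mu-\chi^\lambda_\nu$; this is automatic since the binomial lemma is stated for arbitrary commutative rings and the Schur functions are a $\Z$-basis of $\Lambda_{\Z}$.
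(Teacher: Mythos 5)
Your proposal is correct and is essentially the paper's argument: both rest on the congruence $p_m^{p^r}\equiv p_{pm}^{p^{r-1}}\pmod{p^r}$ in a ring of integer polynomials, obtained by lifting the Frobenius congruence $p_m^p\equiv p_{pm}\pmod p$ via binomial-theorem control of the terms acquired upon raising to the power $p^{r-1}$, followed by a coefficient comparison. The only (immaterial) differences are that you iterate the one-step lemma ``$u\equiv v\pmod{p^s}\Rightarrow u^p\equiv v^p\pmod{p^{s+1}}$'' where the paper expands $(Q(x_1^p,\dots,x_k^p)+pR)^{p^{r-1}}$ in one shot, and that you extract the conclusion from the Schur-basis expansion of $p_\mu$ in $\Lambda_{\mathbb Z}$ rather than from Frobenius's determinantal formula.
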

However, when $r>1$, it is no longer the case that starting from a typical partition $\mu$
of $n$ and repeatedly combining $p^r$ parts of the same size $m$ into $p^{r-1}$ parts of
size $pm$ produces a partition $\widetilde{\mu}$ containing a part substantially larger than
the largest part of a typical partition of $n$. The argument
from~\cite{PeluseSoundararajan2022} that worked for primes thus breaks down for all other
prime powers.

The key idea used to overcome this barrier is a new condition for character values of
the symmetric group to be divisible by a fixed prime power, which we prove by exploiting
certain symmetries that appear after applying the Murnaghan--Nakayama rule multiple times.
\begin{theorem}\label{thm:partitiondivisibilitycondition}
  Let $n,m_1,\dots,m_r$ be distinct positive integers.  Let $\mu$ be
  a partition of $n$ containing parts of size $m_1,\dots,m_r$, each appearing at least
  $p^{r-1}$ times. If $\lambda$ is a $\left(\sum_{i=1}^rk_im_i\right)$-core partition of
  $n$ for all $r$-tuples $(k_1,\dots,k_r)$ of integers $0\leq k_1,\dots,k_r\leq p^{r-1}$
  for which some $k_i=p^{r-1}$, then
  \begin{equation*}
    p^r\mid \chi^\lambda_\mu.
  \end{equation*}
\end{theorem}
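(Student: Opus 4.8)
The plan is to iterate the Murnaghan–Nakayama rule on the parts of $\mu$ of sizes $m_1,\dots,m_r$, peeling them off $p^{r-1}$ at a time, and to organize the resulting sum so that the core hypotheses on $\lambda$ force many terms to vanish or to cancel in groups of size divisible by appropriate powers of $p$. First I would set up notation: write $\mu = (m_1^{a_1}\cdots m_r^{a_r}, \mu')$ where $a_i \ge p^{r-1}$ and $\mu'$ collects the remaining parts, and recall that $\chi^\lambda_\mu = \sum_T (-1)^{\mathrm{ht}(T)}$, where the sum runs over sequences of rim hook removals from $\lambda$ dictated by the parts of $\mu$ in some fixed order. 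I would choose to remove hooks in a carefully chosen order, say all the parts of size $m_1$ first, then all of size $m_2$, and so on.

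The heart of the argument is the following mechanism. Because $\lambda$ is an $(\ell m_i)$-core for each $\ell$ with $1 \le \ell \le p^{r-1}$ lying in the range dictated by the hypothesis (i.e.\ any partial sum $\sum k_i m_i$ with some $k_i = p^{r-1}$), one cannot, at certain stages, remove a rim hook of the relevant size; more usefully, by a theorem relating $t$-cores to abaci / bead configurations, on the $t$-quotient the available moves have a product structure. I would use the standard bijection between partitions and sets of beads on an abacus with $t$ runners: removing a rim hook of size $t$ corresponds to sliding a bead up one position on its runner, and $\lambda$ being a $t$-core means no bead can slide on any runner. The key symmetry is that when we must remove $p^{r-1}$ hooks of size $m_i$ and $\lambda$ is already an $(\ell m_i)$-core for the critical values of $\ell$, the configuration of which runner each removal comes from, together with the heights, exhibits an action of a group of order divisible by $p^r$ (coming from cyclic symmetry of the $p^{r-1}$ identical removals together with the remaining freedom), and one shows the signed contributions are constant on orbits, or that orbit sizes are divisible by $p^r$.

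More concretely, the step I would carry out is: fix the first point in the Murnaghan–Nakayama process at which some hook of size $m_i$ must be removed; the partition reached at that moment is obtained from $\lambda$ by removing hooks of total size $\sum_{j} k_j m_j$ for some $k_j < p^{r-1}$, and is still an $m_i$-core by hypothesis (taking $k_i \to k_i+1$ won't reach $p^{r-1}$ unless it's exactly the critical case, which the hypothesis rules out as a source of obstruction — here I need to be careful). When a hook of size $t = m_i$ genuinely can be removed, I would group the $\binom{?}{?}$-many ways of removing $p^{r-1}$ such hooks and show, via the $m_i$-quotient, that the generating function of $(-1)^{\mathrm{ht}}$ over these removals is a symmetric function evaluation that is divisible by $p^r$ — essentially because it factors through $p^{r-1}$ independent identical choices on the runners and a multinomial coefficient $\binom{p^{r-1}}{\ldots}$ supplies the extra $p$-divisibility beyond what Lemma 2.2 (the $p^r \mid$ combining-parts congruence) already gives, via Kummer's theorem on $p$-adic valuations of multinomials.

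The main obstacle I anticipate is making the ``symmetry'' precise and uniform: the Murnaghan–Nakayama expansion is not manifestly symmetric under permuting removals of equal-size parts (signs and intermediate shapes both change), so the real work is to find the right reorganization — presumably: remove the parts $m_1,\dots,m_r$ in a way that after the removals one lands on an $(\sum k_i m_i)$-core whenever the hypothesis applies, use the abacus description to see that the set of removal-sequences carries a free action of $\Z/p^{r-1}$ on each of the $r$ blocks (hence $\Z/p^{r-1}$ overall suffices to get one factor of $p$ only when $r-1\ge 1$), and combine this with the mod-$p^r$ congruence of Lemma 2.2 to upgrade $p$ to $p^r$. Reconciling the bookkeeping of heights across these regroupings, and checking that the ``critical'' tuples with some $k_i = p^{r-1}$ are exactly the ones where a would-be obstruction is neutralized by the core hypothesis, is where I expect the delicate casework to lie; everything else is a careful but routine manipulation of the Murnaghan–Nakayama rule and $p$-adic valuations of binomial and multinomial coefficients.
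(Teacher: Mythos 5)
Your plan has the right ingredients in outline (Murnaghan--Nakayama iteration, the abacus/runner picture, multinomial coefficients, and the need to control signs), but it misses the actual architecture of the argument and leaves the two hardest points unresolved. First, the source of the $r$ factors of $p$: you try to extract $p^r$ from a single symmetry of order divisible by $p^r$ acting on one block of removals, or alternatively to ``upgrade $p$ to $p^r$'' by invoking Lemma~\ref{lem:combiningparts}. Neither works, and neither is what happens. The paper's proof gains exactly \emph{one} factor of $p$ from each of the $r$ distinct part sizes $m_1,\dots,m_r$: Proposition~\ref{lem:pm-1} shows that if the current partition is a $p^{r-1}m_i$-core, then removing $p^{r-1}$ border strips of length $m_i$ produces coefficients divisible by $p$ (only $p$, not $p^r$), and this is applied $r$ times in succession. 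The elaborate hypothesis that $\lambda$ is a $(\sum k_im_i)$-core for all critical tuples exists precisely to propagate the needed $p^{r-1}m_i$-core condition through the successive removals (Lemma~\ref{lem:pm-1cores}); this is the bookkeeping you flagged as ``here I need to be careful'' but did not carry out, and it is essential rather than a technicality.

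Second, within a single block of $p^{r-1}$ removals of length $m_i$, your multinomial-coefficient argument covers only the case where the removals are spread over at least two runners of the $m_i$-abacus. The critical case is when all $p^{r-1}$ removals live on a single runner: there the multinomial coefficient is $1$ and supplies no divisibility. This is exactly where the $p^{r-1}m_i$-core hypothesis enters: it forces the runner partition to be a $p^{r-1}$-core, hence the skew shape removed from it (of size $p^{r-1}$) cannot be a border strip, and one needs the separate fact (Lemma~\ref{lem8.1}) that the number of standard Young tableaux of a skew shape of prime-power size that is not a border strip is divisible by $p$ --- proved via Littlewood--Richardson coefficients and the congruence $f_\nu\equiv\chi^\nu_{(p^t)}\pmod p$. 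You also leave open the sign problem: reducing the signed sum to a pure count requires showing that $(-1)^{\Ht(h_1)+\cdots+\Ht(h_R)}$ depends only on the initial and final partitions when all hooks have the same length (Lemma~\ref{lem6.1}, proved by tracking a coloring of the $1$'s in the abacus and observing that each residue class mod $m$ evolves independently). Without that, ``orbit sizes divisible by $p$'' tells you nothing about the signed sum. These are not routine details; they are the content of the theorem.
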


Starting with a partition $\mu$ of $n$, repeatedly combine 
$p^r$ parts of the same size $m$ into $p^{r-1}$ parts of size $pm$, until 
the process terminates in a partition $\widetilde{\mu}$ where no part appears more than $p^{r}-1$ times. 
As a preliminary to applying Theorem~\ref{thm:partitiondivisibilitycondition} we show that for a 
typical partition $\mu$, the resulting partition $\widetilde{\mu}$ will have $r$ parts that are 
suitably large, and with each appearing at least $p^{r-1}$ times.


\begin{proposition}\label{prop:manylargeparts}
  Starting with a partition $\mu$ of $n$, repeatedly replace every occurrence of $p^r$
  parts of the same size $m$ by $p^{r-1}$ parts of size $pm$ until we arrive at a
  partition $\tilde{\mu}$ where no part appears more than $p^r-1$ times. Then, except for
\begin{equation*}
  O\left(p(n)\exp\left(-n^{1/20p^r}\right)\right)
\end{equation*}
initial partitions $\mu$, the partition $\widetilde{\mu}$ contains at least $r$ distinct parts
$m_1,\dots,m_r$, each appearing at least $p^{r-1}$ times and satisfying
\begin{equation*}
  p^{r-1}m_j>\Big(1+\frac{1}{6p^r}\Big)\frac{\sqrt{6}}{2\pi}\sqrt{n}\log{n}.
\end{equation*}
This holds uniformly for $p^r\leq 10^{-3}\log{n}/(\log\log{n})^2$.
\end{proposition}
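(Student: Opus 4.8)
The plan is to analyze the combining process by tracking it "prime-by-prime" at each size. First I would understand the terminal partition $\widetilde\mu$: replacing $p^r$ parts of size $m$ by $p^{r-1}$ parts of size $pm$ conserves $n$, and the process must terminate since, for instance, the number of parts strictly decreases. The terminal partition has the property that for every size $m$, the multiplicity of $m$ in $\widetilde\mu$ is at most $p^r-1$. I would then observe that the *total mass* sitting at sizes that are "$p$-adically maximal" is related to the original partition: more precisely, writing $m = p^a m'$ with $p \nmid m'$, the process moves mass from $(a, m')$ to $(a+1, m')$ in blocks. For a fixed $p$-free part $m'$, the evolution of multiplicities across the tower $m', pm', p^2m', \dots$ is governed by base-$p$ "carrying," so that the terminal multiplicities at $m', pm', \ldots$ are essentially the base-$p$ digits (each in $\{0,\ldots,p^r-1\}$, since we only carry when a digit reaches $p^r$) of the total mass $\sum_{a\geq 0} p^a \cdot (\text{original multiplicity of } p^a m')$ — up to the subtlety that we combine $p^r$ at a time, not $p$ at a time, so the "digits" live base $p^r$ rather than base $p$. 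Either way, the key structural fact is: the sizes appearing in $\widetilde\mu$ with large multiplicity correspond to sizes $p^a m'$ where a lot of the original mass concentrated along the tower over $m'$.

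The second ingredient is a probabilistic statement about a uniformly random partition $\mu$ of $n$. By the Erd\H{o}s--Lehner/Fristedt-type description of random partitions, the multiplicity of part size $j$ is approximately an independent geometric random variable with parameter $1 - e^{-cj/\sqrt n}$ where $c = \pi/\sqrt6$; in particular, for $j$ of order $\sqrt n/\log n$, the expected multiplicity is of order $\sqrt n/(j\log n)$, which is $\gg \log n$. I would use this to show that, with failure probability $O(\exp(-n^{1/20p^r}))$, for *many* small $p$-free residues $m'$ (a constant-size set suffices, in fact just needing $r$ of them after combining) the total mass $M_{m'} = \sum_a p^a \cdot (\text{mult. of } p^a m')$ along the tower is large — large enough that, after the carrying/combining, some size $p^a m'$ in that tower has multiplicity at least $p^{r-1}$ and size at least $(1 + \tfrac{1}{6p^r})\tfrac{\sqrt6}{2\pi}\sqrt n\log n / p^{r-1}$. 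Concretely, one wants the "top digit" in the tower to be pushed up to a size exponent $a$ with $p^a m' \gtrsim \sqrt n \log n / p^{r-1}$; this happens precisely when $M_{m'} \gtrsim p^{r-1}\cdot(\sqrt n\log n / p^{r-1})/m' = \sqrt n\log n/m'$, which for bounded $m'$ is $\asymp \sqrt n \log n$, and the expected mass along a tower over a bounded residue is $\sum_{a: p^a m' \lesssim \sqrt n}\, p^a \cdot \sqrt n/(p^a m'\log n) \asymp \sqrt n\log n / m'$ times $(\log\log n)$-type factors — comfortably above threshold, so a large-deviation bound gives the claim for each fixed $m'$, and we need only finitely many such $m'$ to be successful simultaneously.

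Assembling these: I would fix a set $S$ of the $O(r)$ smallest positive integers coprime to $p$ (we need $r$ successes, and a constant proportion of towers will succeed, so $|S| = O(r)$ suffices; one must keep the distinct-sizes requirement $m_1,\dots,m_r$ distinct, which is automatic since distinct $p$-free residues give disjoint towers). For each $m' \in S$, the total tower-mass $M_{m'}$ is a sum of near-independent geometrics; a Chernoff/large-deviation estimate shows $\mathbb P(M_{m'} \text{ below threshold}) = O(\exp(-n^{1/20p^r}))$, where the exponent $1/(20p^r)$ is chosen with room to spare to absorb the (polynomial in $n$) number of relevant part-sizes and the error in the random-partition model. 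A union bound over the $O(r)$ residues in $S$, plus the error in approximating the uniform measure on partitions of $n$ by the independent-geometric model (which is itself $O(\exp(-n^c))$ for a suitable $c$, uniformly in the range $p^r \leq 10^{-3}\log n/(\log\log n)^2$ — this is where that range is used, to keep $n^{1/20p^r}$ from degenerating), yields the stated exceptional count $O(p(n)\exp(-n^{1/20p^r}))$. The main obstacle I anticipate is the combinatorial bookkeeping of the combining process — proving rigorously that the terminal multiplicities along a tower are controlled by the base-$p$ (or base-$p^r$) digits of $M_{m'}$, and in particular that a large $M_{m'}$ forces a large *and* frequently-occurring part in $\widetilde\mu$ rather than the mass dispersing into many sizes each of low multiplicity; handling the interaction between the "$p$ at a time in size" and "$p^r$ at a time in count" aspects of the rule, and making the large-deviation exponents uniform in $p^r$ over the allowed range, will require care but no new ideas.
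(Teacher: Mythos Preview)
Your decomposition into towers over $p$-free residues $m'$ matches the paper's setup, but your central structural claim --- that the terminal multiplicities along a tower are essentially the base-$p$ (or base-$p^r$) digits of the tower mass $M_{m'}$ --- is false for $r\ge 2$, and this breaks the argument. For $r=1$ it is correct: combining $p$ parts of size $m$ into one of size $pm$ is exactly base-$p$ carrying, so the terminal digits are determined by $M_{m'}$. But for $r\ge 2$ the terminal configuration is \emph{not} determined by $M_{m'}$. Concretely, the partition with multiplicity $p^{r-1}-1$ at each of $m', pm', \ldots, p^{A}m'$ is already terminal (no multiplicity reaches $p^r$), has \emph{no} part of multiplicity $\ge p^{r-1}$, and yet has tower mass $(p^{r-1}-1)\frac{p^{A+1}-1}{p-1}\,m'$, which is arbitrarily large. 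So a large $M_{m'}$ alone cannot force a large, frequently occurring part in $\widetilde\mu$; you flagged this as ``the main obstacle'' but described it as bookkeeping, when in fact the reduction to $M_{m'}$ simply fails.

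The paper's argument keeps track of more than the tower mass. For each tower it introduces the count $\widetilde p(k;s)$ of partitions of $k$ into powers of $p$ whose terminal form has no part $p^{j}$, $j\ge s$, of multiplicity $\ge p^{r-1}$, and proves combinatorially (Lemma~\ref{lem:partitiondifference}) that $\widetilde p(k)-\widetilde p(k;s)$ is large once $k$ exceeds roughly $p^{r+s-1}$. The number of bad $\mu$ is then bounded by a direct generating-function estimate (Lemma~\ref{lem9.1}): one writes $p(n;\mathcal L,s)$ as a coefficient in a product over towers, replaces each tower factor $\sum_k \widetilde p(k)z^{\ell k}$ by $\sum_k \widetilde p(k;s)z^{\ell k}$, and evaluates at $z=e^{-\pi/\sqrt{6n}}$. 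A second discrepancy: the paper does not use the $O(r)$ smallest residues, but rather $r$ disjoint \emph{intervals} $\mathcal L_j$ of residues located near $L\asymp \sqrt n\,(\log n)/p^{r+s-1}$ with $s=\lfloor(\log\sqrt n)/(ep^{r})\rfloor$; this calibration of $s$ and $L$ is what produces the exponent $n^{1/20p^{r}}$, and a bounded set of residues would not yield it.
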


The significance of the lower bound on $p^{r-1}m_j$ in
Proposition~\ref{prop:manylargeparts} is that it lies beyond the threshold of values $t$
such that almost every partition of $n$ is a $t$-core.

\begin{lemma}\label{lem:tcores}
  Let $1\leq L\leq \log{n}/\log\log{n}$ be a real number. Then, for any given integer $t$ with 
  \begin{equation*}
    t\geq\Big(1+\frac{1}{L}\Big)\frac{\sqrt{6}}{2\pi}\sqrt{n}\log{n},
  \end{equation*}
  all but
  \begin{equation*}
    O\Big(p(n)\frac{\log{n}}{n^{1/2L}}\Big)
  \end{equation*}
  partitions of $n$ are $t$-cores.
\end{lemma}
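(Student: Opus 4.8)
To prove Lemma~\ref{lem:tcores}, the plan is to control the number $p(n)-c_t(n)$ of partitions of $n$ that are \emph{not} $t$-cores (here $c_t(n)$ denotes the number of $t$-cores of $n$) by the total number
\[
 h_t(n):=\sum_{\lambda\vdash n}\#\{\text{cells of }\lambda\text{ of hook length exactly }t\}
\]
of cells of hook length $t$ occurring among all partitions of $n$, to compute $h_t(n)$ exactly, and then to estimate the resulting quantity via the Hardy--Ramanujan asymptotic for the partition function. For the reduction, recall that $\lambda$ is a $t$-core if and only if it has no cell of hook length $t$: on the $t$-abacus, being a $t$-core means that every runner is in sorted position, and any bead lying immediately below an empty slot on its runner yields a removable $t$-rim hook of $\lambda$, that is, a cell of hook length $t$. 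Hence each non-$t$-core of $n$ contributes at least $1$ to $h_t(n)$, and so $p(n)-c_t(n)\le h_t(n)$.

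Next I would show that $h_t(n)=t\sum_{i\ge 1}p(n-it)$. The pairs $(\lambda,c)$ with $\lambda\vdash n$ and $c$ a cell of $\lambda$ of hook length $t$ biject with the pairs $(\mu,\beta)$ in which $\mu\vdash n-t$ is obtained by stripping off the $t$-rim hook of $\lambda$ associated to $c$ and $\beta$ is an addable $t$-rim hook of $\mu$; thus $h_t(n)=\sum_{\mu\vdash n-t}g(\mu)$, where $g(\mu)$ denotes the number of addable $t$-rim hooks of $\mu$. By the core--quotient correspondence, adding a $t$-rim hook to $\mu$ is the same as adding a single box to one of the $t$ components of the $t$-quotient of $\mu$, and the number of addable boxes of a partition is one more than its number of distinct parts; since the number of \emph{removable} boxes of a partition equals its number of distinct parts, this gives
\[
 g(\mu)=\sum_{i=0}^{t-1}\big(1+\#\{\text{distinct parts of the }i\text{-th quotient component of }\mu\}\big)=t+\#\{\text{cells of }\mu\text{ of hook length }t\}.
\]
Summing over $\mu\vdash n-t$ yields the recursion $h_t(n)=t\,p(n-t)+h_t(n-t)$, which unwinds, using $h_t(m)=0$ for $m<t$, to $h_t(n)=t\sum_{i\ge 1}p(n-it)$.

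It remains to bound $t\sum_{i\ge 1}p(n-it)$ when $t\ge t_0:=\big(1+\tfrac1L\big)\tfrac{\sqrt6}{2\pi}\sqrt n\log n$. By the Hardy--Ramanujan asymptotic $p(m)\sim\tfrac{1}{4m\sqrt3}e^{\pi\sqrt{2m/3}}$ there are absolute constants $0<c_1<C_1$ with $c_1m^{-1}e^{\pi\sqrt{2m/3}}\le p(m)\le C_1m^{-1}e^{\pi\sqrt{2m/3}}$ for all $m\ge1$, and since $\sqrt n-\sqrt{n-s}\ge s/(2\sqrt n)$ this gives $p(n-s)/p(n)\ll e^{-\pi s/(\sqrt6\sqrt n)}$ for $1\le s\le n/2$. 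In particular $p(n-2t)/p(n-t)\ll e^{-\pi t/(\sqrt6\sqrt n)}\le n^{-1/2}$, so the at most $n/t$ terms in $\sum_{i\ge 2}p(n-it)$ each contribute $\ll n^{-1/2}p(n-t)$ and hence $\sum_{i\ge1}p(n-it)\ll p(n-t)$ (the cases $t>n/2$, where $h_t(n)=t\,p(n-t)$ with $n-t<n/2$, and $t>n$, where $c_t(n)=p(n)$, being trivially negligible). Therefore $p(n)-c_t(n)\le h_t(n)\ll t\,e^{-\pi t/(\sqrt6\sqrt n)}\,p(n)$. Finally, $t\mapsto t\,e^{-\pi t/(\sqrt6\sqrt n)}$ increases for $t\le\sqrt6\sqrt n/\pi$ and decreases thereafter, and $t_0>\sqrt6\sqrt n/\pi$ once $\log n\ge 2$; so for every $t\ge t_0$ this is at most
\[
 t_0\,e^{-\pi t_0/(\sqrt6\sqrt n)}=\Big(1+\tfrac1L\Big)\frac{\sqrt6}{2\pi}\sqrt n\,\log n\cdot n^{-(1+1/L)/2}\ll\frac{\log n}{n^{1/2L}},
\]
which gives $p(n)-c_t(n)=O\!\big(p(n)\log n/n^{1/2L}\big)$.

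The step requiring the most care is this last estimate, where one must extract \emph{exactly} the power $n^{-1/2L}$: using the crude bound $p(n-t)\le e^{\pi\sqrt{2(n-t)/3}}$ in place of the Hardy--Ramanujan asymptotic loses a factor of $n$, so the polynomial prefactor must be kept for both $p(n)$ and $p(n-t)$, and one must exploit that $t\mapsto t\,e^{-\pi t/(\sqrt6\sqrt n)}$ is already on its decreasing branch for all $t\ge t_0$, so that the extremal case occurs precisely at $t=t_0$ and produces the matching exponent. The combinatorial identity $h_t(n)=t\sum_i p(n-it)$ is routine $t$-abacus bookkeeping, but should be set up carefully.
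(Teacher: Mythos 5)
Your proof is correct, and it differs from the paper's in an interesting way at the combinatorial step. The paper treats the input ``at most $(t+1)p(n-t)$ partitions of $n$ fail to be $t$-cores'' as a black box, citing Lemma~5 of Morotti, and then runs exactly the analytic argument you give: bound $p(n-t)/p(n)$ by $\exp(-\pi t/\sqrt{6n})$ via the Hardy--Ramanujan asymptotic and observe that $t\,e^{-\pi t/(\sqrt{6n})}$ is decreasing past the lower endpoint $t_0=(1+1/L)\tfrac{\sqrt6}{2\pi}\sqrt n\log n$, so the extremal case $t=t_0$ yields the exponent $n^{-1/2L}$ with the $\log n$ factor. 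You instead reprove the combinatorial input from scratch, and in a stronger, exact form: the identity $h_t(n)=t\sum_{i\ge1}p(n-it)$ for the total number of hook lengths equal to $t$, obtained from the core--quotient correspondence via the recursion $h_t(n)=t\,p(n-t)+h_t(n-t)$, together with the trivial bound (each non-$t$-core contributes at least one such cell). This buys self-containedness and a clean exact formula, at the cost of a page of abacus bookkeeping that the paper avoids by citation; quantitatively the two inputs are of the same strength, since $t\sum_{i\ge1}p(n-it)\asymp t\,p(n-t)\asymp (t+1)p(n-t)$ in the relevant range, and the endgame is identical. One small nit: your intermediate claim $p(n-2t)/p(n-t)\ll e^{-\pi t/(\sqrt6\sqrt n)}$ uses the ratio estimate with $n$ replaced by $n-t$, which as stated requires $t\le (n-t)/2$; in the leftover range $n/3<t\le n/2$ the polynomial prefactor $(n-t)/(n-2t)$ can be large, but there the sum has only the single extra term $p(n-2t)\le p(n-t)$ by monotonicity, so the conclusion $\sum_{i\ge1}p(n-it)\ll p(n-t)$ is unaffected; it would be worth saying this explicitly alongside your remarks on $t>n/2$ and $t>n$.
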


We can swiftly deduce our main result,  Theorem~\ref{thm:main}, from the results stated above.

\begin{proof}[Deducing Theorem~\ref{thm:main}]
  Let $\mu$ be a partition of $n$, and suppose that $\widetilde{\mu}$ is as in
  Proposition~\ref{prop:manylargeparts}. Then, for all but at most
  \begin{equation*}
    O\left(p(n)\exp\left(-n^{1/20p^r}\right)\right)
  \end{equation*}
  choices of $\mu$, the partition $\widetilde{\mu}$ contains at least $r$ distinct parts
  $m_1,\dots,m_r$, each appearing at least $p^{r-1}$ times and satisfying
  \begin{equation}
  \label{2.1}
  p^{r-1}m_j>\Big(1+\frac{1}{6p^r}\Big)\frac{\sqrt{6}}{2\pi}\sqrt{n}\log{n}.
\end{equation}

Consider any $r$-tuple $(k_1, \ldots, k_r)$ with $0\le k_1, \ldots, k_r\le p^{r-1}$ and $k_i=p^{r-1}$ for 
some $i$.  Then $k_1 m_1 +\ldots +k_r m_r$ also exceeds  the bound in \eqref{2.1}, so that 
by Lemma~\ref{lem:tcores} all but $O(p(n) (\log n)/n^{\frac{1}{2L}})$ partitions $\lambda$ of $n$ 
are $(k_1m_1+\ldots+k_rm_r)$-cores.  Since there are at most $r (p^{r-1}+1)^{r-1}$ such $r$-tuples 
$(k_1,\ldots,k_r)$, by the union bound we see that all but at most 
\begin{equation*}
  O\left(p(n)\frac{\log{n}}{n^{1/12p^r}}r\left(p^{r-1}+1\right)^{r-1}\right)
\end{equation*}
partitions $\lambda$ of $n$ are $(k_1 m_1 +\ldots +k_r m_r)$-cores for all choices of the $r$-tuple $(k_1,\ldots, k_r)$.

Theorem~\ref{thm:partitiondivisibilitycondition} now shows that $p^r$ divides $\chi^{\lambda}_{\widetilde \mu}$, and since
$\chi^{\lambda}_{\mu}\equiv\chi^{\lambda}_{\widetilde{\mu}}\pmod{p^r}$ by
Lemma~\ref{lem:combiningparts}, it also follows that $p^r$ divides $\chi^{\lambda}_{\mu}$.   Putting everything together, 
we conclude that the number of partitions $\lambda$ and $\mu$ with $p^r\nmid \chi^{\lambda}_{\mu}$ is at most 
\begin{equation*}
  O\Big(p(n)^2 \Big( \exp(-n^{1/(20p^r)}) + \frac{1}{n^{1/13p^r}}r\left(p^{r-1}+1\right)^{r-1}\Big) \Big) 
  = O\Big( p(n)^2 \exp(-(\log \log n)^2)\Big), 
\end{equation*}
in the range $p^r \le 10^{-3} \log n/(\log \log n)^2$.  
\end{proof}

The rest of the paper is organized as follows.  We will prove
Lemmas~\ref{lem:combiningparts} and~\ref{lem:tcores} in Section~\ref{sec:lemmas},
Theorem~\ref{thm:partitiondivisibilitycondition} in
Sections~\ref{sec:abacus},~\ref{sec:primepower},~\ref{sec:MN}, and~\ref{seclem6.2}, and
Proposition~\ref{prop:manylargeparts} in Sections~\ref{sec:prelims}
and~\ref{sec:finalproof}.

\section{Proofs of Lemmas~\ref{lem:combiningparts} and~\ref{lem:tcores} }\label{sec:lemmas}
We begin by proving the two lemmas stated in the previous section.

\begin{proof}[Proof of Lemma~\ref{lem:combiningparts}]  We claim that  if $Q\in\mathbf{Z}[x_1,\dots,x_k]$ is a polynomial with integer coefficients,
  then
  \begin{equation*}
    Q(x_1,\dots,x_k)^{p^r}\equiv Q(x_1^p,\dots,x_k^p)^{p^{r-1}}\pmod{p^r}.
  \end{equation*}
As is well known, we may write 
 \begin{equation} \label{3.1} 
    Q(x_1,\dots,x_k)^p=Q(x_1^p,\dots,x_k^p)+p\cdot R(x_1,\dots,x_k)
  \end{equation}
 for some $R\in\mathbf{Z}[x_1,\dots,x_k]$, which establishes the claim when $r=1$.  
For $r>1$, raise both sides of \eqref{3.1} to the power $p^{r-1}$, and expand using the binomial theorem: 
  \begin{align*}
Q(x_1,\ldots,x_k)^{p^r} &=  \left(Q(x_1^p,\dots,x_k^p)+p\cdot R(x_1,\dots,x_k)\right)^{p^{r-1}} \\
&= 
Q(x_1^p, \ldots, x_k^p)^{p^{r-1}} + \sum_{\ell =1}^{p^{r-1}} \binom{p^{r-1}}{\ell} Q(x_1^p,\ldots, x_k^p)^{p^{r-1}-\ell} (pR(x_1,\ldots, x_k))^{\ell}. 
 \end{align*}
 Note that for $1\le \ell \le p^{r-1}$ 
   \begin{equation*}
    p^{\ell}\binom{p^{r-1}}{\ell}= p^{\ell}\frac{p^{r-1}}{\ell}\binom{p^{r-1}-1}{\ell-1} \equiv 0 \pmod{p^r}, 
  \end{equation*} 
  since the power of $p$ dividing $\ell$ is certainly at most $\ell-1$.  This establishes our claim. 
  
  The lemma now follows by applying this observation to the polynomials appearing in Frobenius's formula for the character values
  $\chi^\lambda_\mu$ and $\chi^{\lambda}_{\nu}$ (see Chapter 4 of \cite{FultonHarris1991}).
\end{proof}
\begin{proof}[Proof of Lemma~\ref{lem:tcores}]
  The proof is essentially identical to that of Proposition~1
  of~\cite{PeluseSoundararajan2022}, but we include the short argument for
  completeness. Since every partition of $n$ is a $t$-core for $t>n$, we may naturally assume that $t\le n$.  
  From Lemma~5 of~\cite{Morotti2020}, we know that at most $(t+1)p(n-t)$
  partitions of $n$ are not $t$-cores. By the asymptotic formula
  \begin{equation*}
    p(m)\sim\frac{1}{4\sqrt{3}m}\exp\Big(\frac{2\pi}{\sqrt{6}}\sqrt{m}\Big)
  \end{equation*}
  for the partition function, we have
\begin{equation*}
  (t+1)p(n-t)\ll\frac{t+1}{n-t+1}\exp\left(\frac{2\pi}{\sqrt{6}}\sqrt{n-t}\right)\leq\frac{t+1}{n-t+1}\exp\left(\frac{2\pi}{\sqrt{6}}\sqrt{n}-\frac{\pi
    t}{\sqrt{6n}}\right).
\end{equation*}
In the range $n\geq t \ge (1+1/L) \frac{\sqrt{6}}{2\pi} \sqrt{n}\log n$, the
right-hand side above is maximized at the lower endpoint $t=\left(1+1/L\right)\frac{\sqrt{6}}{2\pi}\sqrt{n}\log{n}$. It follows that the number of
 partitions of $n$ that are not $t$-cores is
\begin{equation*}
  \ll \frac{\log{n}}{\sqrt{n}}n^{-(1+1/L)/2}\exp\Big(\frac{2\pi}{\sqrt{6}}\sqrt{n}\Big)\ll p(n)\frac{\log{n}}{n^{1/2L}},
\end{equation*}
where the last step uses again the asymptotic for the partition function.
\end{proof}

\section{Partitions and Abaci}\label{sec:abacus}

The proof of Theorem~\ref{thm:partitiondivisibilitycondition} requires the machinery 
of the {\sl abacus} associated to a partition.   A  good reference for this theory is Section~2.7 
of~\cite{JamesKerber1981}, and we recall some salient facts below.  

\subsection{The notion of an abacus}  An abacus is a bi-infinite sequence of $0$'s and $1$'s beginning with an infinite 
sequence of $1$'s and ending with an infinite sequence of $0$'s.   

More formally,  let
\begin{equation*}
  \calS:=\left\{s:\mathbf{Z}\to\{0,1\}:\text{there exists a }k\geq 0\text{ such that }s(-i)=1\text{ and }s(i)=0\text{ for all }i\geq k\right\}
\end{equation*}
denote the set of all sequences of $0$'s and $1$'s indexed using the integers, that begin with an infinite
sequence of $1$'s and end with an infinite sequence of $0$'s. For example,
\begin{equation*}
 \dots,1,\dots,1,1,1,0,0,1,0,1,1,0,0,0,\dots,0,\dots
\end{equation*}
is in $\calS$.  We consider two sequences $s$ and $s^{\prime}$ in ${\mathcal S}$ to be
equivalent if there is some integer $j$ such that $s(i) = s^{\prime}(i-j)$ for all $i$,
that is, if $s^{\prime}$ can be produced by shifting the terms in $s$ by $j$.  This is an
equivalence relation, and an abacus refers to an equivalence class in ${\mathcal S}$ under
this relation.  We denote by ${\mathcal A}$ the set of such abaci, so that by an element
$a$ of ${\mathcal A}$ we mean the equivalence class consisting of some sequence
$s\in {\mathcal S}$ together with all its shifts.



\subsection{The abacus associated to a partition}  We now show how abaci are  in one-to-one correspondence with partitions of integers.  Starting 
with an integer partition $\lambda$, we construct an abacus $a_{\lambda} \in {\mathcal A}$ as follows.  Draw the Young diagram of $\lambda$, 
and trace out the boundary of the diagram, moving from the lower left-hand corner to the upper right-hand corner, writing a $0$ for each horizontal 
move and a $1$ for each vertical move.  Then prepend an infinite string of $1$'s and append an infinite string of $0$'s to find a
representative of the corresponding element $a_\lambda$ of $\mathcal{A}$.   

This procedure is easily reversed, and starting with an abacus $a$ in ${\mathcal A}$ 
we obtain a Young diagram, which corresponds to a partition $\lambda$.  If $s \in {\mathcal S}$ is a 
representative of $a$, then the partition $\lambda$ is a partition of the integer $n(a)$ which 
counts the number of pairs of indices $(i,j)$ with $i<j$ such that $s(i)=0$ and $s(j)=1$.  


To illustrate, consider the partition $(6,5,3,1,1,1)$, whose Young diagram is pictured in
Figure~\ref{fig:653111}.
\begin{figure}
\begin{equation*}
\ydiagram{6,5,3,1,1,1}
\end{equation*}
  \caption{The Young diagram of $(6,5,3,1,1,1)$}\label{fig:653111}
\end{figure}
If we start in the lower left-hand corner of this diagram and move along the boundary to
the upper right-hand corner, we move right, up three times, right twice, up, right twice, up,
right, and up. The correspondence described above produces the string
\begin{equation}
  \label{eq:seq}
  0,1,1,1,0,0,1,0,0,1,0,1,
\end{equation}
which we can turn into a bi-infinite sequence by adding an infinite sequence of $1$'s to
the beginning and an infinite sequence of $0$'s to the end:
\begin{equation}
  \label{eq:rep}
  \dots,1,\dots,1,1,1,0,1,1,1,0,0,1,0,0,1,0,1,0,0,0,\dots,0,\dots.
\end{equation}
The equivalence class of this sequence is the abacus associated to $(6,5,3,1,1,1)$.

\subsection{Hooks and border strips} Let $\lambda$ be a partition.  The {\sl hook} $h$
associated to a box $b$ in the Young diagram of $\lambda$ consists of the box $b$ together
with all the boxes directly to its right and directly below it.  The hook-length of $h$,
denoted by $\ell(h)$, is the number of boxes contained in the hook.  The {\sl height} of
the hook $h$, denoted by $\Ht(h)$, is one less than the number of rows in the Young
diagram of $\lambda$ that contain a box of $h$.  Associated to each hook is a {\sl border
  strip} (also known as a skew hook), denoted ${\text {bs}}(h)$, which is the connected
region of boundary boxes of the Young diagram running from the rightmost to the bottommost
box of $h$. Removing such a border strip leaves behind a smaller Young diagram.  These
notions play a prominent role in the representation theory of the symmetric group, and in particular
feature in the Murnaghan--Nakayama rule for computing character values, which we next
recall (see Theorem 2.4.7 of ~\cite{JamesKerber1981}, and also Chapter 4 of
\cite{FultonHarris1991}).

\begin{theorem}[The Murnaghan--Nakayama rule]
  Let $n$ and $t$ be positive integers, with $t\leq n$. Let $\sigma\in S_n$ be of the form
  $\sigma=\tau\cdot \rho$, where $\rho$ is a $t$-cycle, and $\tau$ is a permutation of $S_n$ 
  with support disjoint from $\rho$. Let $\lambda$ be a partition of $n$. Then
  \begin{equation}
    \label{eq:MN}
    \chi^\lambda(\sigma)=\sum_{\substack{h\in\lambda \\ \ell(h)=t}}(-1)^{\Ht(h)}\chi^{\lambda\setminus\bs(h)}(\tau).
  \end{equation}
 \end{theorem}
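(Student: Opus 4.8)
The plan is to deduce the rule from the theory of symmetric functions via the Frobenius characteristic map, reducing it to a single identity for Schur functions. Recall the standard facts that, under the Hall inner product $\lr{\cdot,\cdot}$ on the ring $\Lambda$ of symmetric functions, the Schur functions $\{s_\lambda\}$ form an orthonormal basis and the power sums expand as $p_\mu=\sum_{\lambda\vdash|\mu|}\chi^\lambda_\mu\,s_\lambda$, so that $\chi^\lambda_\mu=\lr{p_\mu,s_\lambda}$ (see Chapter~4 of~\cite{FultonHarris1991}). Since $\rho$ and $\tau$ have disjoint supports, the cycle type of $\sigma$ is $\mu=(t,\mu')$, where $\mu'$ is the cycle type of $\tau$ regarded as a permutation of the $n-t$ letters outside $\supp(\rho)$; in particular $\chi^{\lambda\sm\bs(h)}(\tau)=\chi^{\lambda\sm\bs(h)}_{\mu'}$ for any hook $h$ of length $t$, since $\lambda\sm\bs(h)$ is a partition of $n-t$. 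Because $p_\mu=p_t\cdot p_{\mu'}$,
\begin{equation*}
  \chi^\lambda(\sigma)=\lr{p_t\,p_{\mu'},s_\lambda}=\lr{p_{\mu'},p_t^{\perp}s_\lambda},
\end{equation*}
where $p_t^{\perp}$ denotes the adjoint of multiplication by $p_t$. Expanding $p_t^{\perp}s_\lambda$ in the Schur basis and using $\lr{p_{\mu'},s_\nu}=\chi^\nu_{\mu'}$, the theorem reduces to the symmetric-function identity
\begin{equation*}
  p_t^{\perp}s_\lambda=\sum_{\nu}(-1)^{\Ht(\lambda/\nu)}\,s_\nu,
\end{equation*}
the sum running over partitions $\nu\subseteq\lambda$ for which $\lambda/\nu$ is a border strip with $t$ boxes: the map $h\mapsto\lambda\sm\bs(h)$ is a bijection from the hooks $h\in\lambda$ with $\ell(h)=t$ onto such $\nu$, with $\Ht(h)=\Ht(\lambda/\nu)$, so the identity matches \eqref{eq:MN} term by term.

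It remains to prove this identity, which is the heart of the matter. First I would note that $p_t^{\perp}$ is a derivation of $\Lambda$ --- being the adjoint of multiplication by the primitive element $p_t$ --- and that $p_t^{\perp}h_m=h_{m-t}$ for all $m\ge0$ (with the convention $h_m:=0$ for $m<0$); this is immediate from the identification $p_t^{\perp}=t\,\partial/\partial p_t$ in the presentation $\Lambda=\QQ[p_1,p_2,\dots]$ together with $h_m=\sum_{\kappa\vdash m}p_\kappa/z_\kappa$, or from the generating function $\sum_{m\ge0}h_mz^m=\exp\big(\sum_{k\ge1}p_kz^k/k\big)$. Applying the derivation $p_t^{\perp}$ to the Jacobi--Trudi determinant $s_\lambda=\det\big(h_{\lambda_i-i+j}\big)_{1\le i,j\le\ell}$ (with $\ell\ge\ell(\lambda)$ and $h_0=1$) produces a sum of $\ell$ determinants, the $i$-th obtained by lowering every index in row $i$ by $t$; this $i$-th determinant is the Jacobi--Trudi expression attached to the integer sequence $\alpha^{(i)}=(\lambda_1,\dots,\lambda_{i-1},\lambda_i-t,\lambda_{i+1},\dots,\lambda_\ell)$. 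By the standard straightening rule for such determinants, it vanishes if the shifted entries $\alpha^{(i)}_k-k$ are not pairwise distinct, and otherwise equals $\sgn(\pi)\,s_\nu$, where $\pi$ sorts those entries into strictly decreasing order and $\nu$ is the partition they then define.

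I expect the main obstacle to be the bookkeeping that identifies the nonzero contributions with border-strip removals and pins down the sign. Writing $\beta_k=\lambda_k+\ell-k$ for the first-column hook lengths --- equivalently the bead positions of the abacus of $\lambda$ from this section --- the passage from $\lambda$ to $\alpha^{(i)}$ simply moves the bead at position $\beta_i$ back to position $\beta_i-t$: the determinant vanishes precisely when that target position is already occupied, and otherwise $\nu$ is the partition with the resulting bead set, which is exactly $\lambda$ with a $t$-box border strip removed. The sorting permutation $\pi$ carries $\beta_i-t$ past each bead strictly between positions $\beta_i-t$ and $\beta_i$, so $\sgn(\pi)=(-1)^c$ with $c$ the number of such beads; and $c=\Ht(\lambda/\nu)$, because the rows met by the removed border strip are exactly row $i$ together with the $c$ rows indexed by those beads, and a border strip's height is one less than the number of rows it meets. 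Carrying out this finite verification --- matching the re-sorting sign against the border-strip height --- is where the genuine work lies; everything else is formal. There are also purely representation-theoretic proofs of \eqref{eq:MN}, by induction on $n$ via the branching rule for $S_n\downarrow S_{n-1}$, but essentially the same border-strip combinatorics reappears there, so the symmetric-function route seems the most transparent.
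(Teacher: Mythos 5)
The paper does not prove this theorem at all: the Murnaghan--Nakayama rule is quoted as a classical result, with references to Theorem~2.4.7 of~\cite{JamesKerber1981} and Chapter~4 of~\cite{FultonHarris1991}, so there is no in-paper argument to compare against. Your symmetric-function proof is one of the standard routes (essentially Macdonald's: $\chi^\lambda_\mu=\lr{p_\mu,s_\lambda}$, the derivation $p_t^{\perp}=t\,\partial/\partial p_t$ with $p_t^{\perp}h_m=h_{m-t}$, row-by-row differentiation of the Jacobi--Trudi determinant, and straightening read off the abacus), and the outline is correct; it also dovetails nicely with the abacus formalism the paper develops in this section, since the bead positions $\beta_k=\lambda_k+\ell-k$ and the ``number of beads strictly between $\beta_i-t$ and $\beta_i$'' are exactly the paper's description of $\Ht(h)$. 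One point to state more carefully: your vanishing criterion ``the determinant vanishes precisely when the target position is already occupied'' covers the case $\beta_i-t=\beta_k$ (two equal rows), but not the case $\beta_i-t<0$; there the $i$-th determinant vanishes because every entry of row $i$ is $h_m$ with $m<0$, which is zero by your stated convention rather than by a repeated row. (In the paper's bi-infinite abacus all negative positions carry $1$'s, so ``occupied'' is morally right, but those beads are not rows of the finite determinant.) With that case handled, the remaining bookkeeping --- the bijection between hooks of length $t$ and border strips, and the identification of the sorting sign with $(-1)^{\Ht(\lambda/\nu)}$ --- is exactly as you describe, so the argument goes through.
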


 Above, $\chi^{\lambda}(\sigma)$ denotes the value of the character of the irreducible
 representation of $S_n$ corresponding to the partition $\lambda$, evaluated on the
 conjugacy class of $\sigma$, $\lambda\setminus \bs(h)$ denotes the partition of $n-t$
 obtained by removing the border strip $\bs(h)$ from the Young diagram of $\lambda$, and
 $\chi^{\lambda\setminus \bs(h)}(\tau)$ denotes the character value of the irreducible
 representation of $S_{n-t}$ corresponding to the partition $\lambda\setminus \bs(h)$
 evaluated on the conjugacy class of $\tau$.

 The abacus notation helps with thinking about hook lengths and border strips.  Let
 $\lambda$ be a partition, let $a_\lambda$ denote the corresponding abacus, and let $s$ be
 a representative in ${\mathcal S}$ for the abacus $a_\lambda$.  Each hook $h$ in the
 Young diagram of $\lambda$ is in natural one-to-one correspondence with a pair of indices
 $(i,j)$, $i<j$, with $s(i)=0$ and $s(j)=1$.  The length of the hook $h$ is $j-i$.  In
 particular, the partition $\lambda$ contains no hooks of length $t$ (that is, $\lambda$
 is a $t$-core) if and only if there is no pair of indices $(i,i+t)$ with $s(i)=0$ and
 $s(i+t)=1$.  The height of the hook $h$ equals the number of $1$'s in the sequence $s$
 lying strictly between the $0$ at index $i$ and the $1$ at index $j$:
\begin{equation*}
  \Ht(h)=\#\left\{i<k<j:s(k)=1\right\}.
\end{equation*}

Further, the abacus notation gives an easy description of the result of removing a border
strip from a partition.  Define, for any pair of distinct integers $(i,j)$ the operator
$T_{ij}: {\mathcal S} \to {\mathcal S}$ that swaps the terms indexed by $i$ and $j$ in a
bi-infinite sequence $s\in {\mathcal S}$ and leaves all other entries fixed.  Thus for
$s\in {\mathcal S}$
$$ 
(T_{ij} s) (k) =   \begin{cases}
    s(k) & k\neq i, j \\
    s(j)    & k=i \\
    s(i)   & k=j.
  \end{cases}
$$
With this notation in place, suppose $\lambda$ is a partition, and $s\in a_{\lambda}$ is a representative of the abacus of $\lambda$.  
Let $h$ be a hook of $\lambda$, corresponding to the pair of indices $(i,j)$ (with $i<j$) in $s$.   Then $T_{ij}s$ is a representative of 
the abacus associated to $\lambda \setminus \bs(h)$. 


Returning to our example of the partition $(6,5,3,1,1,1)$, Figure~\ref{fig:653111hooks}
contains its Young diagram again, but now with each box filled in with the
corresponding hook-length.
\begin{figure}
\begin{equation*}
  \ytableausetup{centertableaux}
  \begin{ytableau}
    11 & 7 & 6 & 4 & 3 & 1 \\
    9 & 5 & 4 & 2 & 1 \\
    6 & 2 & 1 \\
    3 \\
    2 \\
    1
  \end{ytableau}
\end{equation*}
\caption{Hook-lengths for $(6,5,3,1,1,1)$}\label{fig:653111hooks}
\end{figure}
The unique hook $h$ of length $5$ in the diagram corresponds to the pair of indices
$(5,10)$ of the sequence~\eqref{eq:seq}. If we remove the corresponding border strip, we
obtain the diagram pictured in Figure~\ref{fig:621111}, which corresponds to the partition
$(6,5,3,1,1,1)\setminus \bs(h)=(6,2,1,1,1,1)$ and the bi-infinite sequence
\begin{equation*}
  \dots,1,\dots,1,1,1,0,1,1,1,1,0,1,0,0,0,0,1,0,0,0\dots,0,\dots
\end{equation*}
of $0$'s and $1$'s.
\begin{figure}
\begin{equation*}
  \ydiagram{6,2,1,1,1,1}
\end{equation*}
\caption{The Young diagram of $(6,2,1,1,1,1)$}\label{fig:621111}
\end{figure}
Note that if we swap the $0$ and $1$ corresponding to the hook $h$
in the representative~\eqref{eq:rep} of $a_{(6,5,3,1,1,1)}$, then we get an equivalent bi-infinite
sequence.

\subsection{Removing several hooks in succession}  In our work below, we will need to remove several hooks (more precisely, the border strips corresponding 
to those hooks) 
in succession from a partition.  By removing a sequence of hooks $h_1$, $\ldots$, $h_R$ from a partition $\lambda$, we 
mean the following: $h_1$ is a hook of $\lambda$, $h_2$ is a hook of $\lambda \setminus \bs(h_1)$, $h_3$ is a 
hook of $\lambda\setminus \bs(h_1) \setminus \bs(h_2)$, and so on, until we arrive at $h_R$ 
which is a hook of $\lambda \setminus \bs(h_1) \ldots \setminus \bs(h_{R-1})$, and when this is removed we obtain 
the final partition $\lambda^{\prime} = \lambda \setminus \bs(h_1) \ldots \setminus \bs (h_R)$.    

Let $s$ be a representative of the abacus $a_\lambda$ associated to $\lambda$.  Let
$(i_1,j_1)$ denote the pair of indices in $s$ corresponding to the hook $h_1$, $(i_2,j_2)$
the corresponding pair to $h_2$ (which, recall, is a hook of $\lambda \setminus \bs(h_1)$
corresponding to the bi-infinite sequence $T_{i_1,j_1}s$), and so on.  Thus, the sequence
of hooks $h_1$, $\ldots$, $h_R$ may be encoded by the $R$-tuple of pairs
$((i_1,j_1),(i_2, j_2), \ldots, (i_R,j_R))$, and the process of removing these hooks
results in the sequence
$$ 
s^{\prime} = T_{i_{R},j_{R}}  T_{i_{R-1}, j_{R-1} } \cdots T_{i_1, j_1} s. 
$$ 
The sequence $s^{\prime}$ is a representative of the abacus $a_{\lambda^{\prime}}$ associated to the partition $\lambda^{\prime}$. 

Of particular interest for us will be the situation where all the hooks have the same
length, $m$ say.  Here $j_k = i_k +m$ for all $1\le k\le R$, and we may encode the
sequence of hooks by simply the $R$-tuple $(i_1, \ldots, i_R)$.  Note that the indices
$i_1$, $\ldots$, $i_R$ may contain repeats, but there are also constraints, such as
$i_2 \neq i_1$ (since $(i_1,i_1+m)$ is a hook in $s$ and so it cannot be a hook in
$T_{i_1, i_1+m} s$).

\section{Plan of the proof of Theorem~\ref{thm:partitiondivisibilitycondition}}\label{sec:primepower}

We begin by restating Theorem~\ref{thm:partitiondivisibilitycondition} in terms of values of
irreducible characters at elements of $S_n$, which will make the notation involved in its
proof cleaner.

\begin{theorem}[An equivalent formulation of Theorem~\ref{thm:partitiondivisibilitycondition}]
  \label{thm:divisibilitycondition}
  Let $n$, $m_1$, $\dots$, $m_{r}$ be distinct positive integers.   Let 
  $\sigma\in S_n$ be a permutation of the form
  \begin{equation*}
    \sigma=\tau\cdot\prod_{i=1}^{r}\prod_{j=1}^{p^{r-1}}\rho_i^{(j)},
  \end{equation*}
  where each $\rho_i^{(j)}$ is a cycle of length $m_i$, the supports of all the cycles
  $\rho_i^{(j)}$ are disjoint, and $\tau\in S_n$ is a permutation with support disjoint
  from those of the $\rho_i^{(j)}$'s.   Suppose that $\lambda$ is a $(\sum_{i=1}^{r}k_im_i)$-core
  partition of $n$ for all $r$-tuples $(k_1,\dots,k_{r})$ of integers
  $0\leq k_1,\dots,k_r\leq p^{r-1}$ for which some $k_i=p^{r-1}$.  Then
  \begin{equation*}
    p^r\mid \chi^\lambda(\sigma).
  \end{equation*}
\end{theorem}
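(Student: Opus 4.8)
The core idea is to apply the Murnaghan--Nakayama rule $r p^{r-1}$ times to strip away all the cycles $\rho_i^{(j)}$ from $\sigma$, working on the abacus. Iterating \eqref{eq:MN}, we get
\[
\chi^\lambda(\sigma) = \sum (-1)^{\sum \Ht(h_{i,j})} \chi^{\lambda'}(\tau),
\]
where the sum runs over all ways of removing a sequence of $p^{r-1}$ border strips of length $m_1$, then $p^{r-1}$ of length $m_2$, and so on, ending in a partition $\lambda'$. In abacus language, fixing a representative $s$ of $a_\lambda$, each such term is encoded by an $r$-tuple of $p^{r-1}$-tuples of indices $(\ul{i}_1,\dots,\ul{i}_r)$, with $\ul{i}_k = (i_{k,1},\dots,i_{k,p^{r-1}})$ recording the positions where the hooks of length $m_k$ are removed; the resulting sequence is $s' = \prod_k \prod_j T_{i_{k,j},\,i_{k,j}+m_k}\, s$, read in the correct order. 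The plan is to show that this big sum, grouped appropriately, is divisible by $p^r$.

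**Key steps.** First, I would use the $t$-core hypothesis to drastically restrict which index-tuples contribute. If for some $\ell\le r$ the multiset of hook removals of length $m_\ell$ uses an index exactly $p^{r-1}$ times — i.e.\ all $p^{r-1}$ of those removals happen ``at the same spot,'' meaning one bead is repeatedly toggled — then at an intermediate stage we are forced to have removed (net effect on the abacus) a configuration that would exhibit $\lambda$ as having a hook of length $\sum k_i m_i$ with $k_\ell = p^{r-1}$, contradicting the hypothesis. More carefully: the net displacement of the beads after all removals, if it ever ``telescopes'' so that some bead moves by $p^{r-1} m_\ell$ (or more generally $\sum k_i m_i$ with a coordinate equal to $p^{r-1}$) relative to its start, forces $\lambda$ not to be the claimed core. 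So the surviving terms are exactly those where, for each $\ell$, the $p^{r-1}$ removals of length-$m_\ell$ hooks are ``spread out'' — no single bead absorbs all of them in a way that creates a forbidden long hook. Second, on this restricted set of index-tuples I would exhibit a free action of a $p$-group — permuting the $p^{r-1}$ hooks of a given length among themselves, or a suitable cyclic shift — under which both $\lambda'$ and the sign $(-1)^{\sum\Ht(h_{i,j})}$ are invariant (commuting disjoint-ish transpositions on the abacus commute, and reordering the removals only permutes the heights, whose sum is unchanged mod $2$ — or one tracks the sign change and pairs it off). Since the stabilizer-free orbits have size divisible by $p^r$ (one factor of $p$ coming roughly from each of the $r$ ``levels'' of freedom, reflecting the $p^{r-1}$ copies), the contributions cancel in blocks of size a multiple of $p^r$, and each $\chi^{\lambda'}(\tau)\in\Z$, giving $p^r \mid \chi^\lambda(\sigma)$.

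**Main obstacle.** The delicate point — and where I expect the real work of Sections~\ref{sec:MN} and~\ref{seclem6.2} to go — is making the symmetry/group-action argument precise enough to get the full power $p^r$, not just $p$. Naively, permuting the $p^{r-1}$ equal-length hooks of a single size $m_\ell$ only produces an $S_{p^{r-1}}$-action, whose orbit sizes need not be divisible by $p^r$; and the signs $(-1)^{\Ht}$ genuinely change when one reorders border-strip removals, because intermediate diagrams differ. So one must (i) choose the grouping of terms cleverly — likely fixing the final abacus $s'$ (equivalently $\lambda'$) and summing over all index-tuples producing it, then understanding the fibers of the map (index-tuple) $\mapsto s'$ combinatorially via the abacus; (ii) show these fibers carry a transitive action of an \emph{elementary abelian} or iterated wreath $p$-group of order divisible by $p^r$ with controlled sign character; and (iii) handle the sign by showing either that it is constant on fibers, or that the non-trivial sign character forces an \emph{exact} cancellation in pairs, upgrading to $p^r$ when combined with the remaining $p^{r-1}$-fold multiplicity. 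The bookkeeping of heights $\Ht(h) = \#\{i<k<j: s(k)=1\}$ as beads move past one another, and verifying that the forbidden-hook/$t$-core exclusion kills precisely the terms where this bookkeeping would break, is the technical heart of the argument.
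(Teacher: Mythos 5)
Your skeleton matches the paper's in two respects: you iterate the Murnaghan--Nakayama rule on the abacus, and you correctly guess that the sign $(-1)^{\sum\Ht}$ depends only on the initial and final partitions, not on the particular sequence of removals (this is Lemma~\ref{lem6.1}, proved by a coloring argument on the residue classes of the abacus modulo $m$). But the engine you propose for the divisibility --- a free action of a $p$-group of order divisible by $p^r$ on the surviving index-tuples --- is not established, and you yourself flag the obstruction: permuting the $p^{r-1}$ equal-length removals gives only an $S_{p^{r-1}}$-action whose orbits need not have size divisible by $p$. The paper does not repair this with a cleverer group action. It extracts one factor of $p$ at a time, once per distinct part length $m_i$ (Proposition~\ref{lem:pm-1}): for a single length $m$, the number of ways to pass from $\lambda$ to a fixed $\lambda'$ by removing $p^{r-1}$ border strips of length $m$ is computed exactly by splitting the abacus into residue classes mod $m$; it equals a multinomial coefficient $\binom{p^{r-1}}{\ell_1,\dots,\ell_m}$ times a product of numbers $f_{\lambda(a;m)/\lambda'(a;m)}$ of standard Young tableaux of skew shapes. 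The multinomial coefficient is divisible by $p$ unless all removals fall in one residue class, and in that remaining case the $p^{r-1}m$-core hypothesis forces the skew shape not to be a border strip, whence $p\mid f_{\pi/\tau}$ by a Littlewood--Richardson argument using $f_\nu\equiv\chi^\nu_{(p^t)}\pmod{p}$ (Lemma~\ref{lem8.1}). None of this combinatorial input appears in your plan, and it is the actual content of the proof.

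The second gap is how the $r$ factors of $p$ accumulate. In your ``one big sum'' the fiber over a final $\lambda'$ mixes all intermediate partitions, and to run the counting argument at the next length $m_{r-1}$ you must know that the partitions obtained after stripping the $m_r$-hooks are still $p^{r-1}m_\ell$-cores for $\ell<r$. That is precisely what the mixed-core hypothesis ($\lambda$ a $(\sum_i k_im_i)$-core whenever some $k_i=p^{r-1}$) is for, via Lemma~\ref{lem:pm-1cores}: a partition that is both a $t$-core and a $(t+m)$-core remains a $t$-core after removing an $m$-strip. Your use of that hypothesis --- to exclude ``telescoping'' index-tuples outright --- is not how it enters; the problematic tuples are not excluded, their count is shown to be a multiple of $p$. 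As written, the proposal names the right objects to study but omits the two steps on which the theorem actually rests: the exact fiber count with its $p$-divisibility, and the propagation of the core property through the $r$ successive stages.
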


The proof of Theorem~\ref{thm:divisibilitycondition} rests on the following crucial proposition, which is 
based on applying the Murnaghan--Nakayama rule $p^{r-1}$ times.   

\begin{proposition}
  \label{lem:pm-1}  Let $r$, $m$ and $n$ be positive integers.  Let $\sigma\in S_n$ be of the form
  \begin{equation*}
    \sigma=\tau\cdot\prod_{j=1}^{p^{r-1}}\rho^{(j)},
  \end{equation*}
  where each $\rho^{(j)}$ is an $m$-cycle, with all the cycles $\rho^{(j)}$ being disjoint, and with $\tau\in S_n$ being a permutation whose 
  support is disjoint from all the cycles $\rho^{(j)}$.  
  Denote by $L$ the set of partitions of $n-p^{r-1}m$ that can be obtained from $\lambda$ by removing, in succession, $p^{r-1}$
  border strips of length $m$. If $\lambda$ is a $p^{r-1}m$-core partition of $n$,
  then
  \begin{equation*}
    \chi^\lambda(\sigma)=p\sum_{\lambda'\in L}\epsilon_{\lambda'}\chi^{\lambda'}(\tau),
  \end{equation*}
  where each $\epsilon_{\lambda'}$ is an integer.
\end{proposition}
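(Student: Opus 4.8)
The plan is to apply the Murnaghan--Nakayama rule $p^{r-1}$ times, peeling off the $m$-cycles $\rho^{(1)},\dots,\rho^{(p^{r-1})}$ one at a time. Call a tuple $\mathcal H=(h_1,\dots,h_{p^{r-1}})$ a \emph{ribbon sequence} if $h_1$ is a hook of length $m$ in $\lambda$, $h_2$ is a hook of length $m$ in $\lambda\setminus\bs(h_1)$, and so on; write $\lambda\setminus\mathcal H:=\lambda\setminus\bs(h_1)\setminus\dots\setminus\bs(h_{p^{r-1}})$ and $\Ht(\mathcal H):=\sum_{k}\Ht(h_k)$. Iterating \eqref{eq:MN} gives
\[
  \chi^\lambda(\sigma)=\sum_{\mathcal H}(-1)^{\Ht(\mathcal H)}\chi^{\lambda\setminus\mathcal H}(\tau),
\]
the sum running over all ribbon sequences $\mathcal H$. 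Grouping the terms by $\lambda'=\lambda\setminus\mathcal H\in L$, it suffices to prove that for each $\lambda'\in L$ the signed count
\[
  N_{\lambda'}:=\sum_{\mathcal H:\,\lambda\setminus\mathcal H=\lambda'}(-1)^{\Ht(\mathcal H)}
\]
is divisible by $p$, for then $\epsilon_{\lambda'}:=N_{\lambda'}/p$ does the job. (When $r=1$ there is nothing to do: $\lambda$ is an $m$-core, so $L=\emptyset$ and both sides vanish.)

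Next I would pass to the abacus of $\lambda$ and use the $m$-runner picture, splitting the bi-infinite sequence into its $m$ residue classes modulo $m$ as in Section~\ref{sec:abacus}. Removing a border strip of length $m$ corresponds to sliding a single bead one step along one of the runners, the $c$-th runner being, on its own, the abacus of the $c$-th component $\mu^{(c)}$ of the $m$-quotient of $\lambda$; the hypothesis that $\lambda$ is a $p^{r-1}m$-core is equivalent to the statement that each $\mu^{(c)}$ is a $p^{r-1}$-core. Two observations organize the sum defining $N_{\lambda'}$. First, slides on two distinct runners commute as operations on the bead configuration, and the legality of a slide along a runner is unaffected by slides on the other runners; hence a ribbon sequence $\mathcal H$ is exactly the data of, for each runner, a legal sequence of slides along it, together with a choice of interleaving of these $m$ streams into one ordered sequence. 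Second, the sign $(-1)^{\Ht(\mathcal H)}$ depends only on $\lambda$ and $\lambda\setminus\mathcal H$, not on $\mathcal H$ itself: this is the standard fact that successively removing border strips, all of a common length $m$, from $\lambda$ down to any fixed partition always produces the same total sign (cf. the abacus discussion in Section~2.7 of~\cite{JamesKerber1981}). Combining these, $N_{\lambda'}=\pm\#\{\mathcal H:\lambda\setminus\mathcal H=\lambda'\}$.

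It remains to count ribbon sequences with $\lambda\setminus\mathcal H=\lambda'$. The partition $\lambda'$ determines its $m$-quotient $(\nu^{(0)},\dots,\nu^{(m-1)})$, hence also the numbers $w_c:=|\mu^{(c)}|-|\nu^{(c)}|$ of slides that take place along runner $c$, with $\nu^{(c)}\subseteq\mu^{(c)}$ and $\sum_{c=0}^{m-1}w_c=p^{r-1}$. The legal slide-sequences along runner $c$ carrying $\mu^{(c)}$ to $\nu^{(c)}$ are precisely the orders in which one may delete the boxes of the skew shape $\mu^{(c)}/\nu^{(c)}$, and there are $f^{\mu^{(c)}/\nu^{(c)}}$ of them (the number of standard Young tableaux of that skew shape); the interleavings of the $m$ streams number $\binom{p^{r-1}}{w_0,\dots,w_{m-1}}$, and every interleaving is legal and yields a distinct $\mathcal H$ because slides on distinct runners commute. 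Therefore
\[
  \#\{\mathcal H:\lambda\setminus\mathcal H=\lambda'\}=\binom{p^{r-1}}{w_0,\dots,w_{m-1}}\prod_{c=0}^{m-1}f^{\mu^{(c)}/\nu^{(c)}}.
\]
By Legendre's formula, the $p$-adic valuation of $\binom{p^{r-1}}{w_0,\dots,w_{m-1}}$ equals $\tfrac{1}{p-1}\bigl(\sum_{c}s_p(w_c)-1\bigr)$, where $s_p$ is the base-$p$ digit sum; since each nonzero $w_c$ contributes at least $1$ to the sum, this is positive unless exactly one $w_{c_0}$ is nonzero, in which case $w_{c_0}=p^{r-1}$. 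Outside this one case, $p$ already divides the multinomial coefficient, and we are done.

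The leftover case — all $p^{r-1}$ slides occurring along a single runner $c_0$ — is where I expect the real difficulty, and it is the only place the hypothesis on $\lambda$ is used in an essential way. Here one must show that $p\mid f^{\mu^{(c_0)}/\nu^{(c_0)}}$ whenever $\mu^{(c_0)}$ is a $p^{r-1}$-core and $\nu^{(c_0)}$ is obtained from it by deleting $p^{r-1}$ boxes. I would attack this through the rigidity of $p^{r-1}$-cores — for instance, a $p^{r-1}$-core has all of its successive parts (and successive column lengths) differing by less than $p^{r-1}$, which on the abacus means each runner of its $p^{r-1}$-runner display is flush — and use this to construct a fixed-point-free action of $\mathbf Z/p\mathbf Z$ on the set of deletion orders for $\mu^{(c_0)}/\nu^{(c_0)}$, forcing that set to have size divisible by $p$. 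Granting this, $p\mid N_{\lambda'}$ for every $\lambda'\in L$, which proves the proposition.
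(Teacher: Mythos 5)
Your proposal follows the paper's own argument very closely for most of its length: iterate the Murnaghan--Nakayama rule, use constancy of the total sign $(-1)^{\Ht(\mathcal H)}$ over all removal sequences joining $\lambda$ to a fixed $\lambda'$ (this is the paper's Lemma~\ref{lem6.1}; you cite it as standard rather than proving it, which is a minor omission since the paper devotes Section~\ref{sec:MN} to it), split the abacus into its $m$ runners, count removal sequences as a multinomial coefficient times a product of skew standard Young tableau numbers, and note that the multinomial is divisible by $p$ except when all $p^{r-1}$ removals occur on a single runner. All of this is correct and is exactly the content of Lemma~\ref{lem6.2} and its proof.

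The genuine gap is the single-runner case, which you explicitly defer (``Granting this''), and which is precisely the heart of the proposition --- as you yourself observe, it is the only place the $p^{r-1}m$-core hypothesis is used. You must show that $p\mid f_{\mu^{(c_0)}/\nu^{(c_0)}}$ when $\mu^{(c_0)}$ is a $p^{r-1}$-core and the skew shape has size $p^{r-1}$, and your proposed mechanism --- rigidity of cores plus a hoped-for fixed-point-free action of $\Z/p\Z$ on the set of deletion orders --- is not constructed and it is not at all clear how to produce it from the stated rigidity property. The paper's argument here is of a different nature and is what actually closes the proof: the core hypothesis is used only to see that the skew shape cannot be a border strip of the runner partition (a border strip of size $p^{r-1}$ would be a hook of length $p^{r-1}$, contradicting the core condition); then Lemma~\ref{lem8.1} shows that for any skew shape of prime-power size $p^t$ that is not a border strip, $p$ divides the number of its standard Young tableaux. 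That lemma is proved not by a group action but via the Littlewood--Richardson expansion $f_{\pi/\tau}=\sum_{\nu\vdash p^t}f_\nu c^{\pi}_{\tau\nu}$, the congruence $f_\nu\equiv\chi^{\nu}_{(p^t)}\pmod p$ (a consequence of Lemma~\ref{lem:combiningparts}), which eliminates all $\nu$ except hooks of length $p^t$, and the observation that such hooks cannot fit inside a minimal $\pi$ because every hook of $\pi$ is shorter than $p^t$. Note also that the border-strip dichotomy is essential: if the skew shape were a border strip, the tableau count could be $1$, so any correct completion must invoke the core hypothesis exactly at this point. Without an argument of this kind, or an actual construction of your free $\Z/p\Z$-action, the proposal does not yet prove the proposition.
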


We will quickly deduce Theorem~\ref{thm:divisibilitycondition} (and hence Theorem~\ref{thm:partitiondivisibilitycondition}) 
from Proposition~\ref{lem:pm-1} and the following simple observation.  
 
\begin{lemma}
  \label{lem:pm-1cores}  Let $n$, $t$ and $m$ be positive integers.  Let $\lambda$ be a partition of $n$ 
  which is both a $t$-core and a $(t+m)$-core.   Let $\lambda^{\prime}$ be a partition of $n-m$ that can be obtained 
  by removing a border strip of length $m$ from $\lambda$.  Then $\lambda^{\prime}$ is a $t$-core.  
\end{lemma}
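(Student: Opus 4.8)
The plan is to argue entirely within the abacus formalism of Section~\ref{sec:abacus}, using the characterization recalled there that a partition is a $u$-core precisely when some (equivalently, any) representative $s \in \calS$ of its abacus has no pair of indices $(i, i+u)$ with $s(i) = 0$ and $s(i+u) = 1$.

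First I would fix a representative $s$ of $a_\lambda$. The passage from $\lambda$ to $\lambda'$ amounts to choosing a hook of $\lambda$ of length $m$, i.e.\ a pair $(i, i+m)$ with $s(i) = 0$ and $s(i+m) = 1$, and replacing $s$ by $s' := T_{i,i+m} s$; then $s'$ represents $a_{\lambda'}$, and it agrees with $s$ everywhere except that $s'(i) = 1$ and $s'(i+m) = 0$.

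Next, suppose for contradiction that $\lambda'$ is not a $t$-core, so that $s'$ has a pair $(j, j+t)$ with $s'(j) = 0$ and $s'(j+t) = 1$. Because $\lambda$ is a $t$-core, the sequence $s$ has no such pair, and since $s$ and $s'$ differ only at $i$ and $i+m$, one of $j$ and $j+t$ must lie in $\{i, i+m\}$. I would rule out each of the four possibilities. If $j = i$ then $s'(j) = s'(i) = 1$, and if $j + t = i+m$ then $s'(j+t) = s'(i+m) = 0$; both contradict the choice of the pair. In the remaining two cases one invokes the hypothesis that $\lambda$ is also a $(t+m)$-core. If $j = i + m$, then $i + m + t$ differs from both $i$ and $i + m$ (as $t, m \geq 1$), so $s(i+m+t) = s'(i+m+t) = 1$; together with $s(i) = 0$ this is a hook of $\lambda$ of length $m + t$, a contradiction. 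If $j + t = i$, then $i - t$ differs from both $i$ and $i + m$, so $s(i - t) = s'(i-t) = 0$; together with $s(i+m) = 1$ this is a hook of $\lambda$ of length $(i+m) - (i-t) = m+t$, again a contradiction. Hence no bad pair for $s'$ exists, i.e.\ $\lambda'$ is a $t$-core.

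There is no real obstacle here --- this is the ``simple observation'' promised in the text --- and the only thing to be careful about is the elementary index arithmetic showing that the indices $i + m + t$ and $i - t$ appearing in the two substantive cases never collide with $i$ or $i+m$, so that $s$ and $s'$ genuinely agree there. It is worth noting that the argument really does use both core hypotheses: the distance-$t$ hypothesis to locate the bad pair near $\{i, i+m\}$, and the distance-$(t+m)$ hypothesis to finish.
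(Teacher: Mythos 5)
Your proof is correct and follows essentially the same argument as the paper: pass to abacus representatives, note that $s$ and $s' = T_{i,i+m}s$ differ only at the two indices $i$ and $i+m$, and show that any hypothetical $t$-hook of $\lambda'$ would force a $(t+m)$-hook of $\lambda$. The only cosmetic difference is that you explicitly dispose of the two trivial subcases $j=i$ and $j+t=i+m$, which the paper handles implicitly by observing that $s'(i)=1$ and $s'(i+m)=0$.
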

\begin{proof}  If $\lambda$ has no hook (and thus no border strip) of length $m$ then the lemma holds vacuously.  Suppose 
that $\lambda^{\prime}$ arises from removing the border strip corresponding to the hook $h$ of length $m$ in $\lambda$.  
Let $a_\lambda$ be the abacus of $\lambda$, and $s$ be a representative bi-infinite sequence in $a_\lambda$.  Suppose 
the hook $h$ corresponds to the pair of indices $(i, i+m)$ with $s(i)= 0$ and $s(i+m)=1$, so that the partition $\lambda^{\prime}$ 
corresponds to the abacus containing ${s'} = T_{i,i+m}  s$.  

If $\lambda^{\prime}$ is not a $t$-core, then there must exist a pair of indices $(j, j+t)$ with ${s'}(j)=0$ and ${s'}(j+t)= 1$.  
Since the entries of $s$ and $s'$ differ only at the indices $i$ and $i+m$, and since $\lambda$ is a $t$-core, we must have either 
$j=i+m$, or $j+t=i$.  If $j=i+m$, then $s(i)=0$ and $s(i+t+m)={s'}(j+t)=1$ which contradicts the assumption that $\lambda$ is a $(t+m)$-core. 
If $j=i-t$, then $s(i-t) ={s'}(j) = 0$ and $s(i+m)=1$, which again contradicts the assumption that $\lambda$ is a $(t+m)$-core.  
\end{proof}


\begin{proof}[Deducing Theorem~\ref{thm:divisibilitycondition} from Proposition~\ref{lem:pm-1}]  Apply Proposition~\ref{lem:pm-1} 
first with $m=m_r$ to obtain 
$$ 
\chi^{\lambda}(\sigma)  = p \sum_{\lambda^{\prime} \in L} \epsilon_{\lambda^{\prime}} \chi^{\lambda^{\prime}} \Big( \tau \prod_{i=1}^{r-1}\prod_{j=1}^{p^{r-1}} \rho_i^{(j)} \Big). 
$$ 
If $t$ is any number of the form $t=\sum_{i=1}^{r-1} k_i m_i$ where the $k_i$ lie in
$[0,p^{r-1}]$ with at least one of them being $p^{r-1}$, then $\lambda$ is a
$(t+k_r m_r)$-core for all $0\le k_r \le p^{r-1}$.  Since any $\lambda^{\prime}\in L$
arises from $\lambda$ by removing $p^{r-1}$ border strips of length $m_r$, it follows by
$p^{r-1}$ applications of Lemma~\ref{lem:pm-1cores} that $\lambda^{\prime}$ is a $t$-core.

We may now repeat this argument, applying Proposition~\ref{lem:pm-1} to each
$\lambda^{\prime}\in L$ and now removing $p^{r-1}$ border strips of length $m_{r-1}$.
Applications of Lemma~\ref{lem:pm-1cores} show that the new partitions
$\lambda^{\prime\prime}$ that arise are ($\sum_{i=1}^{r-2} k_i m_i$)-cores for all choices
of $0\le k_i\le p^{r-1}$ with some $k_i=p^{r-1}$.

Carrying this argument out $r$ times, we obtain the desired result.  
\end{proof}

The proof of Proposition~\ref{lem:pm-1} depends on the following two lemmas, which we shall prove in the next two sections. 

\begin{lemma} \label{lem6.1}  Let $\lambda$ be a partition, and let $\lambda^{\prime}$ be 
obtained from $\lambda$ by removing a sequence of $R$ border strips of the same length $m$.   Let $h_1$, $\ldots$, $h_R$ be a sequence of $R$ hooks of length $m$  
which may be removed from the initial partition $\lambda$ to result in the final partition $\lambda^{\prime}$.   Then 
$$ 
(-1)^{\Ht (h_1) + \ldots + \Ht(h_R)} = \epsilon(\lambda, \lambda^{\prime})
$$ 
where the sign $\epsilon(\lambda, \lambda^{\prime}) = \pm 1$ depends only on the initial and final partitions $\lambda$ and $\lambda^{\prime}$ and is the same 
for all such possible sequences of hooks.  
\end{lemma}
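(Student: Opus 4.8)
The plan is to work entirely on the abacus and to reduce the statement to a purely combinatorial fact about transposing entries in a bi-infinite binary sequence. Recall from Section~\ref{sec:abacus} that if $s$ is a representative of $a_\lambda$ and the hooks $h_1,\dots,h_R$ (all of length $m$) correspond to indices $(i_1,i_1+m),\dots,(i_R,i_R+m)$ in the successive sequences, then removing them amounts to applying $T_{i_R,i_R+m}\cdots T_{i_1,i_1+m}$ to $s$, and $\Ht(h_k)$ counts the number of $1$'s strictly between positions $i_k$ and $i_k+m$ in the sequence obtained after the first $k-1$ swaps. So $(-1)^{\Ht(h_1)+\dots+\Ht(h_R)}$ is the product of the signs picked up at each step. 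First I would observe that a single swap $T_{i,i+m}$ that changes a $0$ at position $i$ and a $1$ at position $i+m$ into a $1$ and a $0$ is exactly a sequence of $\Ht(h)$ adjacent transpositions of consecutive nonzero-spaced... more precisely, it can be realized as moving a single $1$ past the $\Ht(h)$ intervening $1$'s. The cleanest formalization is to assign to each sequence $s\in\calS$ (or rather to each abacus together with a choice of how many $1$'s have been shifted off to the left — equivalently, fix the representative so that there is a well-defined notion of the set of positions of the $1$'s) a sign, and to show that the sign picked up by $T_{i,i+m}$ equals the change in this global quantity.

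The concrete device I would use is the following. Fix a representative $s$ of $a_\lambda$; let $P(s)=\{k : s(k)=1\}$ be the (left-infinite) set of positions of $1$'s. For a pair of such sequences $s,s'$ related by finitely many $T_{ij}$'s with $j-i=m$ throughout, $P(s)$ and $P(s')$ differ in finitely many places, and in fact $P(s')$ is obtained from $P(s)$ by a permutation that shifts finitely many elements by multiples of $m$. I would define an inversion-type statistic: write the $1$'s that get moved in terms of a permutation $w$ of a finite index set, and set $\ve(s,s') := \sgn(w)$. The key computation is that one step $T_{i,i+m}$ (with $s(i)=0$, $s(i+m)=1$) contributes exactly $(-1)^{\Ht(h)}$ to this sign: moving the $1$ from position $i+m$ to position $i$ slides it past precisely the $1$'s lying strictly between, of which there are $\Ht(h)$, each contributing one transposition, so the sign of that elementary move is $(-1)^{\Ht(h)}$. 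Multiplying over $k=1,\dots,R$ gives $(-1)^{\sum \Ht(h_k)} = \ve(s,s')$ where $s'$ is the final sequence. Since $\ve(s,s')$ is manifestly computed from $P(s)$ and $P(s')$ alone — i.e. from $a_\lambda$ and $a_{\lambda'}$, hence from $\lambda$ and $\lambda'$ — and is independent of the intermediate steps, setting $\epsilon(\lambda,\lambda')=\ve(s,s')$ finishes the proof. One should check this is independent of the choice of representative $s$: shifting $s$ shifts all positions by the same constant and does not change the relative permutation $w$, so $\ve$ is well-defined on abaci.

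The main obstacle, and the step deserving the most care, is making the sign $\ve(s,s')$ genuinely well-defined and manifestly independent of the path — that is, setting up the bookkeeping so that $P(s)$ and $P(s')$ are compared by a canonical finite permutation whose sign does not secretly depend on how the $1$'s were moved around. The subtlety is that $P(s)$ is infinite, so one must identify the finite sub-multiset of positions that actually change and argue that the induced bijection $P(s)\to P(s')$ is unique up to a permutation fixing the unchanged positions, hence has a well-defined sign; monotonicity (listing the changed positions of $s$ in increasing order and likewise for $s'$) gives the canonical choice. Once that is pinned down, the per-step identity "sign of $T_{i,i+m}$ on this data $= (-1)^{\Ht(h)}$" is the short computation sketched above, using the abacus description $\Ht(h)=\#\{i<k<i+m : s(k)=1\}$ from Section~\ref{sec:abacus}. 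Everything else — telescoping the product of signs over the $R$ steps, independence from the representative, and the conclusion that $\epsilon(\lambda,\lambda')$ depends only on the endpoints — is then formal.
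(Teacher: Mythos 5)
Your overall architecture matches the paper's: track the $1$'s of the abacus as labelled objects, observe that removing a hook $h$ corresponding to the pair $(i,i+m)$ moves one labelled $1$ leftward past exactly $\Ht(h)$ other $1$'s (so each step contributes $(-1)^{\Ht(h)}$ to the sign of a composite permutation of labels), and reduce the lemma to showing that the composite bijection from the $1$'s of $s$ to the $1$'s of $s'$ is the same for every admissible sequence of hooks. The gap is in how you resolve that last, crucial point. You assert that the induced bijection is ``unique up to a permutation fixing the unchanged positions, hence has a well-defined sign''; but a permutation fixing the unchanged positions can still transpose two changed ones and flip the sign, so this is precisely the ambiguity to be ruled out, not a reason it is harmless. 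Moreover your proposed canonical matching --- list the changed positions of $s$ in increasing order, likewise for $s'$, and pair them monotonically --- is not the bijection that hook removal actually induces. Take $m=2$ and $s$ with $s(5)=s(6)=1$, $s(0)=\dots=s(4)=0$ (the partition $(5,5)$). Removing the hooks $(4,6),(2,4),(0,2),(3,5)$ sends the $1$ at position $6$ to position $0$ and the $1$ at position $5$ to position $3$; the heights are $1,0,0,0$, so $(-1)^{\sum\Ht(h_k)}=-1$, consistent with the fact that these two $1$'s have exchanged their relative order. Your globally monotone matching instead pairs $5\mapsto 0$ and $6\mapsto 3$, preserves order, and has sign $+1$.

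A telltale symptom is that your argument never uses the hypothesis that all $R$ hooks have the same length $m$, and the lemma is false without it: removing one $1$-hook and then one $2$-hook from $(2,1)$ to reach the empty partition can be done through $(2)$ with total sign $+1$ or through $(1,1)$ with total sign $-1$. The equal-length hypothesis must enter exactly where you identify the induced bijection, via a strengthening of your own observation that each $1$ moves only by multiples of $m$: a removal shifts a $1$ from $a+\ell m$ to $a+(\ell-1)m$ only if that target currently holds a $0$, so a $1$ can never pass another $1$ in its own residue class mod $m$. Hence the induced bijection is forced to be monotone \emph{within each residue class mod $m$} (not globally monotone); this determines it, and with it the sign, purely from $s$ and $s'$. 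That is the paper's argument, phrased there by colouring the $1$'s and noting that the colour sequence along each progression $a \pmod m$ can never change.
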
 

We are grateful to the referee for pointing out that Lemma~\ref{lem6.1} may be found in the literature as Proposition 2.2 of \cite{MO88}.  In the interest of 
keeping our exposition self-contained, we include the short proof of Lemma \ref{lem6.1} in Section 6.

\begin{lemma} \label{lem6.2} Let $\lambda$ be a $p^{r-1}m$-core partition, and let
  $\lambda^{\prime}$ be a partition that can be obtained from $\lambda$ by removing
  $R=p^{r-1}$ border strips of length $m$.  The number of tuples $(i_1, \ldots, i_R)$ such
  that
$$ 
s^{\prime} = T_{i_R, i_R+m} T_{i_{R-1},i_{R-1}+m} \cdots T_{i_1, i_1+m} s 
$$ 
is a multiple of $p$.  Here $s$ is a representative of the abacus of $\lambda$, and the partition $\lambda^{\prime}$ corresponds to the 
abacus containing $s^{\prime}$.
\end{lemma}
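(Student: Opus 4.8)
The plan is to split the count of admissible tuples according to the residue classes of positions modulo $m$ --- the ``runners'' of the abacus --- and to handle the two resulting factors separately. (Throughout, a tuple $(i_1,\dots,i_R)$ as in the statement is understood to encode $R$ successive admissible removals of border strips of length $m$, as in Section~\ref{sec:abacus}; this is the quantity needed in Proposition~\ref{lem:pm-1}.) First I would note that a hook of length $m$ corresponds to a pair of indices $(i,i+m)$ lying in a single residue class $c:=i\bmod m$, that $T_{i,i+m}$ changes only the entries of $s$ in that class, and that operators attached to different classes commute. Write $\sigma_c\in\calS$ for the subsequence of $s$ supported on the positions $\equiv c\pmod m$, re-indexed so that consecutive such positions become adjacent, and let $\pi_c$ be the partition it represents; then a length-$m$ hook on runner $c$ becomes a length-$1$ hook on $\sigma_c$, that is, the removal of a corner box of $\pi_c$. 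Since the admissibility of the $k$-th removal depends only on the state of the runner on which it acts, an admissible tuple ending at $s'$ is the same datum as a word recording which runner each of the $R$ steps uses, together with, for each $c$, an admissible sequence of corner removals carrying $\pi_c$ to the partition $\pi_c'$ represented by the matching runner of $s'$. The number $R_c$ of removals needed on runner $c$ is forced --- each drops the box count of $\pi_c$ by one --- and $\sum_c R_c=R=p^{r-1}$, so the count factors as
\begin{equation*}
\binom{p^{r-1}}{R_0,\dots,R_{m-1}}\prod_{c=0}^{m-1}f^{\pi_c/\pi_c'},
\end{equation*}
where $f^{\pi_c/\pi_c'}$ is the number of standard Young tableaux of the skew shape $\pi_c/\pi_c'$ (equivalently, the number of ways to strip its boxes off one corner at a time).

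I would then argue by cases. If at least two of the $R_c$ are nonzero, then --- because $p^{r-1}$ is written in base $p$ as a single $1$ followed by zeros --- adding the $R_c$ in base $p$ forces a carry, so $p$ divides the multinomial coefficient by Kummer's theorem and we are done. Otherwise all removals occur on a single runner $c_0$ with $R_{c_0}=p^{r-1}$, and the count equals $f^{\pi_{c_0}/\pi_{c_0}'}$. This is where the hypothesis on $\lambda$ enters: since $\lambda$ is a $p^{r-1}m$-core, no $\sigma_c$ has a hook of length $p^{r-1}$, because such a hook lifts to a hook of length $p^{r-1}m$ in $\lambda$; hence $\pi_{c_0}$ is a $p^{r-1}$-core partition. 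It therefore remains to prove the following purely combinatorial statement: if $\pi$ is a $p^{r-1}$-core partition and $\pi'\subseteq\pi$ with $|\pi/\pi'|=p^{r-1}$, then $p\mid f^{\pi/\pi'}$.

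To prove this I would use the Aitken--Feit determinant formula $f^{\pi/\pi'}=q!\det\big(1/(a_i-b_j)!\big)_{i,j}$, where $q=p^{r-1}$, $a_i=\pi_i-i$, $b_j=\pi'_j-j$, and $1/k!:=0$ for $k<0$. The term of a permutation $w$ is nonzero only when each $d_i:=a_i-b_{w(i)}$ is $\ge 0$, and then the $d_i$ are nonnegative integers summing to $q$; by Legendre's formula its $p$-adic valuation equals $v_p(q!)-(q-\Sigma_w)/(p-1)$, where $\Sigma_w=\sum_i s_p(d_i)$ and $s_p$ is the base-$p$ digit sum. Subadditivity of $s_p$ gives $\Sigma_w\ge s_p(q)=1$, with equality only if some $d_i=q$ and the remaining $d_j$ vanish; but this says exactly that the multiset of beta-numbers of $\pi'$ is that of $\pi$ with one entry decreased by $q$, i.e.\ that $\pi$ has a hook of length $q$ --- impossible since $\pi$ is a $q$-core. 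As $\Sigma_w\equiv q\equiv 1\pmod{p-1}$, the next possible value is $\Sigma_w\ge p$, so every nonzero term has $p$-adic valuation at least $v_p(q!)-(q-p)/(p-1)=1$, and therefore so does $f^{\pi/\pi'}$, being a sum of such terms. This gives $p\mid f^{\pi/\pi'}$, completing the proof.

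The step I expect to be the main obstacle is this last one: recognizing that the $p^{r-1}$-core hypothesis is precisely what rules out the unique minimal (``$\Sigma_w=1$'') term of the Aitken--Feit determinant, and thereby raises the $p$-adic valuation by exactly one. By comparison the runner decomposition and the Kummer-type estimate are routine; the one place that needs care is the displayed factorization, which relies on the commutativity of operators attached to distinct runners and on the admissibility of each removal being a condition internal to a single runner.
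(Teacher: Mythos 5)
Your proof is correct, and it follows the paper's skeleton in its first half while replacing the paper's key divisibility lemma with a genuinely different argument. The runner decomposition --- observing that removals on distinct residue classes mod $m$ commute, that admissibility is internal to a runner, and hence that the count factors as $\binom{p^{r-1}}{R_0,\dots,R_{m-1}}\prod_c f^{\pi_c/\pi_c'}$ with the multinomial divisible by $p$ unless all removals sit on a single runner --- is exactly the argument of Section~\ref{seclem6.2}. Where you diverge is the single-runner case. The paper reduces it to Lemma~\ref{lem8.1}: if a skew shape of prime-power size $p^t$ is \emph{not} a border strip, then $p\mid f_{\pi/\tau}$; this is proved with symmetric-function machinery, expanding $f_{\pi/\tau}=\sum_{\nu\vdash p^t} f_\nu\, c^{\pi}_{\tau\nu}$, invoking Lemma~\ref{lem:combiningparts} to get $f_\nu\equiv\chi^{\nu}_{(p^t)}\pmod p$ (so only hook shapes $\nu$ survive mod $p$), and killing those by the vanishing of $c^{\pi}_{\tau\nu}$ when $\nu\not\subseteq\pi$. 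You instead prove directly the statement actually needed --- $\pi$ a $p^{r-1}$-core and $|\pi/\pi'|=p^{r-1}$ forces $p\mid f^{\pi/\pi'}$ --- from the Aitken determinant, computing the $p$-adic valuation of each nonzero term via Legendre's formula and base-$p$ digit sums, and noting that the unique term with your $\Sigma_w=1$ corresponds precisely to one beta-number of $\pi$ dropping by $p^{r-1}$, i.e.\ to $\pi$ having a hook of length $p^{r-1}$, which the core hypothesis forbids; the congruence $\Sigma_w\equiv 1\pmod{p-1}$ then forces every surviving term to have valuation at least $1$. Your route is more elementary and self-contained (no Littlewood--Richardson coefficients or character congruences), at the cost of being tailored to the core hypothesis; the paper's Lemma~\ref{lem8.1} is more general (any non-border-strip skew shape of prime-power size) and your intermediate statement is in fact a consequence of it, since a $p^{r-1}$-core admits no rim hook of length $p^{r-1}$. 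One small point to spell out in a full write-up: in the excluded minimal term, where $d_{i_0}=p^{r-1}$ and all other $d_j=0$, you should note that $a_{i_0}-p^{r-1}$ cannot coincide with another $a_j$ (otherwise two of the distinct $b$'s would be equal), so it is a genuinely unoccupied abacus position and hence a bona fide hook of length $p^{r-1}$ in $\pi$.
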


Once Lemmas \ref{lem6.1} and \ref{lem6.2} are in place, it is a simple matter to deduce Proposition~\ref{lem:pm-1}.

\begin{proof}[Deducing Proposition~~\ref{lem:pm-1}]  We apply the Murnaghan--Nakayama rule repeatedly while removing in succession 
$R=p^{r-1}$ hooks of length $m$ from $\lambda$.  This will result in an expression for $\chi^{\lambda}(\sigma)$ of the form 
$\sum_{\lambda^{\prime} \in L} c_{\lambda^{\prime}} \chi^{\lambda^{\prime}}(\tau)$, for suitable integers $c_{\lambda^{\prime}}$ 
which we must show are multiples of $p$.  Now 
$$ 
c_{\lambda^{\prime}} = \sum_{(i_1, \ldots, i_R)} (-1)^{\Ht(h_1) +\ldots + \Ht(h_R)} 
$$ 
where the sum is over all $R$-tuples $(i_1,\ldots, i_R)$ corresponding to hooks $h_1$, $\ldots$, $h_R$, which when removed from $\lambda$ 
in order result in the partition $\lambda^{\prime}$.   Lemma \ref{lem6.1} tells us that the sign $(-1)^{\Ht(h_1) +\ldots + \Ht(h_R)}$ is the same 
for all suitable tuples $(i_1,\ldots,i_R)$, and Lemma \ref{lem6.2} tells us that the number of such $R$-tuples is a multiple of $p$.  
\end{proof}


\section{Parity of heights of hooks:  Proof of Lemma~\ref{lem6.1}}\label{sec:MN}

Let $\lambda$ be a partition, and $s$ a representative of the abacus $a_\lambda$ associated 
to $\lambda$.   Augment $s$ by coloring a finite number $N$ 
of $1$'s in $s$ with distinct colors, taking care to color all the $1$'s appearing to the right of the first zero in $s$.  The $1$'s appearing to the left of the first $0$ 
are unimportant, but we allow the flexibility of coloring some of them since this situation may arise at an intermediate step when we remove hooks from $\lambda$.  
Note that the number 
of  $1$'s appearing to the right of the first zero equals the number of rows in the partition $\lambda$.  Thus $N$ is 
at least the number of rows in $\lambda$.  Color these $1$'s in the order of their appearance in $s$ using the colors $c_1$, $\ldots$, $c_N$. Call the 
augmented sequence ${\widehat s}$.  

 We begin with a general observation on removing hooks.  Suppose $(i,j)$ is a pair of indices corresponding to a hook $h$ in $s$ (at the moment 
the hook can have any length $j-i$).  Removing this hook produces the sequence $T_{i,j} s$.  Considering the augmented sequence ${\widehat s}$, we 
have the corresponding augmented sequence $T_{i,j} {\widehat s}$ after removing this hook.  If we consider the sequence of colors among the 
$1$'s in this sequence, we obtain a permutation $\pi_{ij}$, say, of the original sequence of colors $(c_1, \ldots, c_N)$ --- the $1$ appearing in $(T_{i,j}{\widehat s})(i)$ has 
the color of the $1$ in ${\widehat s}(j)$, and all other $1$'s in $T_{ij}({\widehat s})$ retain their color in ${\widehat s}$.  If the height of the hook removed is $k$, then note that 
${\widehat s}$ had $k$ colored $1$'s between $s(i)=0$ and $s(j)=1$ and the permutation $\pi_{ij}$ can be obtained by making $k$-transpositions, each time swapping the color of 
the $1$ at position $j$ by the color immediately preceding it.  Thus $(-1)^{k}= (-1)^{\Ht(h)}$ equals the sign of the permutation $\pi_{ij}$.

If we remove hooks $h_1$, $\ldots$, $h_\ell$ in succession (again, their lengths could be arbitrary), then the associated permutations of colors multiply, and 
therefore so do the signs of these permutations.  Thus, after removing these hooks in succession we would arrive at a permutation $\pi$ of the sequence of colors 
$(c_1, \ldots, c_N)$ and 
$$ 
(-1)^{\Ht(h_1) + \Ht(h_2) +\ldots + \Ht(h_\ell)} = \text{sgn}(\pi).
$$ 
 
 We now turn to the situation of the lemma, where a sequence $h_1$, $\ldots$, $h_R$ of $R$ hooks is removed all of length $m$.  Our observation 
 above shows that removing these hooks in order leads to the sequence ${\widehat s}^{\, \prime}$ where the color of the $1$'s is given by a permutation $\pi$ of the original 
 sequence of colors $c_1$, $\ldots$, $c_N$.  Further the sign of this permutation $\text{sgn}(\pi)$ equals $(-1)^{\Ht(h_1)+ \ldots + \Ht(h_R)}$. 
 
 To complete the proof, we will show that every way of removing $R$ hooks of length $m$ that leads to the partition $\lambda^{\prime}$ results in the same permutation of 
 colors $\pi$.  Consider the subsequence of ${\widehat s}$ obtained by restricting to a progression $(\bmod \, m)$: namely, $({\widehat s}(a+\ell m))_{\ell\in {\mathbf Z}}$. 
 There are $m$ such subsequences corresponding to $a=1$, $\ldots$, $m$.  Since the hooks removed all have length $m$, each removal of a hook affects only the terms within 
 one of these subsequences, leaving all the other subsequences unaltered.  Further within any particular subsequence  $({\widehat s}(a+\ell m))_{\ell\in {\mathbf Z}}$, 
 it is impossible to alter the original sequence of colors by removing any sequence of hooks of length $m$.  Therefore we can determine uniquely the color of 
 any element in ${\widehat s}^{\, \prime}$:  the $1$'s appearing in this sequence in the progression $a \pmod m$ have colors determined by their order of appearance in the original sequence $s$.


\section{Proof of Lemma~\ref{lem6.2}} \label{seclem6.2}

Let $\lambda$ be a $p^{r-1}m$-core partition, and let $s$ be a representative of its abacus.  Let $s^{\prime}$ 
be the sequence obtained by removing a sequence of $R=p^{r-1}$ border strips of length $m$ from $\lambda$, and 
let $\lambda^{\prime}$ be the partition associated to $s^{\prime}$.   Our goal is to show that the number of ways of 
reaching $\lambda^{\prime}$ starting from $\lambda$ is a multiple of $p$.   

Let us first note that when $r=1$, it is impossible to remove a border strip of length $m$
from $\lambda$, since $\lambda$ is an $m$-core partition by assumption.  Thus the number of ways here
is $0$, and the lemma holds (vacuously).  Henceforth, assume that $r\ge 2$.

For each $a=1$, $\ldots$, $m$, consider the subsequences of $s$ and $s^{\prime}$ obtained by restricting to the progression 
$a\pmod m$: thus, set
$$ 
s(a;m) = (s(a+\ell m))_{\ell \in {\mathbb Z}}, \qquad s^{\prime}(a;m) = (s^{\prime}(a+\ell m))_{\ell \in {\mathbb Z}}. 
$$ 
We may think of $s(a;m)$ and $s^{\prime}(a;m)$ as corresponding to partitions $\lambda(a;m)$ and $\lambda^{\prime}(a;m)$, and note that a hook of length $m$ in 
the partition $\lambda$ corresponds to a hook of length $1$ (or simply a border square) in the partition $\lambda(a;m)$ 
(for some choice of $a$).  Since $\lambda^{\prime}(a;m)$ arises from $\lambda(a;m)$ by removing some number of hooks of length $1$, 
the Young diagram of the partition $\lambda^{\prime}(a;m)$ is contained in the Young diagram of the partition $\lambda(a;m)$
 (that is, $\lambda_i(a;m) \ge \lambda^{\prime}_i(a;m)$ for all $i$).  The difference between the Young diagram of $\lambda(a;m)$ and $\lambda^{\prime}(a;m)$ 
 (in other words, the boxes in $\lambda(a;m)$ that are not in $\lambda^{\prime}(a;m)$) is a skew diagram, which we denote by $\lambda(a;m)/\lambda^{\prime}(a;m)$.  
 Let $\ell_a$ denote the size of this skew diagram $|\lambda(a;m)/\lambda^{\prime}(a;m)|$, so that $\ell_a$ hooks of length $1$ must 
 be removed from $\lambda(a;m)$ to reach $\lambda^{\prime}(a;m)$.   Since a total of $R=p^{r-1}$ hooks of length $m$ are removed to go from $\lambda$ to $\lambda^{\prime}$, 
 note that 
 $$ 
 R= p^{r-1} = \sum_{a=1}^{m} \ell_a. 
 $$

The number of ways to go from $\lambda(a;m)$ to $\lambda^{\prime}(a;m)$ by removing successively $\ell_a$ hooks of length $1$ equals 
the number of standard Young tableaux of skew shape $\lambda(a;m)/\lambda^{\prime}(a;m)$, which we denote (in the usual notation) by 
$f_{\lambda(a;m)/\lambda^{\prime}(a;m)}$.   Recall that a standard Young tableau of this skew shape is a numbering of the boxes in the 
skew diagram using the numbers $1$ to $\ell_a$ such that the entries are increasing from left to right in each row, and increasing down each 
column. Each such tableau corresponds to a way of removing hooks, by removing boxes in descending order of their entries. 

We can now count the number of ways of going from $\lambda$ to $\lambda^{\prime}$ by removing $R$ hooks of length $m$.  Note that removing a 
hook from one subsequence $s(a;m)$ has no impact on the hooks in any of the other subsequences.  Therefore the desired number of ways 
to proceed from $\lambda$ to $\lambda^{\prime}$ equals 
$$ 
\binom{p^{r-1}}{\ell_1, \ell_2, \ldots, \ell_m} \prod_{a=1}^m f_{\lambda(a;m)/\lambda^{\prime}(a;m)}. 
$$
 
 The multinomial coefficient $\binom{p^{r-1}}{\ell_1, \ell_2, \ldots, \ell_m}$ is a multiple of $p$, except in the situation where 
 $\ell_a = p^{r-1}$ for some $a$ (and all other $\ell_j$ are $0$).  Thus we are left with the case when all the hooks of length $m$ 
 in going from $\lambda$ to $\lambda^{\prime}$ are confined to one subsequence $s(a;m)$.  So far, we have not made use of 
 the condition that $\lambda$ is a $p^{r-1}m$-core, and it is only in this case that we need this assumption.  The assumption 
 implies that $\lambda(a;m)$ is $p^{r-1}$-core, and so the skew diagram $\lambda(a;m)/\lambda^{\prime}(a;m)$ 
 (which has size $\ell_a = p^{r-1}$) cannot be a border strip of $\lambda(a;m)$.  In this situation, it turns out that $f_{\lambda(a;m)/\lambda^{\prime}(a;m)}$ 
 is a multiple of $p$.  This is implied by our next lemma, which is perhaps of independent interest.

\begin{lemma} \label{lem8.1}  Let $\pi$ and $\tau$ be two partitions, with the Young diagram of 
$\pi$ containing the Young diagram of $\tau$ (thus $\pi_i \ge \tau_i$ for all $i$).   Suppose the skew diagram 
$\pi/\tau$ is not a border strip of the partition $\pi$ (equivalently, either $\pi/\tau$ is disconnected, or it contains a 
$2\times 2$ square), and that $|\pi/\tau| = p^{t}$ is a prime power (with $t >0$).   Then the number of standard Young tableaux of skew shape $\pi/\tau$,  
denoted $f_{\pi/\tau}$, is a multiple of $p$. 
\end{lemma}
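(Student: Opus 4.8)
The plan is to read off the divisibility from the vanishing of a skew character. Write $n=|\pi/\tau|=p^t$; we may assume $t\ge 1$, since when $n=1$ the shape $\pi/\tau$ is a single box, which is a border strip, and the hypothesis is vacuous. I would realize $f_{\pi/\tau}$ as the degree of the skew character $\chi^{\pi/\tau}$ of $S_n$ --- that is, the character of the skew Specht module $S^{\pi/\tau}$, a genuine representation of $S_n$ of dimension equal to the number $f_{\pi/\tau}$ of standard Young tableaux of skew shape $\pi/\tau$ --- so that $\chi^{\pi/\tau}$ is integer-valued and $\chi^{\pi/\tau}(1)=f_{\pi/\tau}$.

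Next I would evaluate $\chi^{\pi/\tau}$ on an $n$-cycle $c\in S_n$ via the skew form of the Murnaghan--Nakayama rule. Removing a border strip of length $n$ from the $n$-box skew diagram $\pi/\tau$ forces that border strip to be all of $\pi/\tau$, which is possible only if $\pi/\tau$ is itself a border strip. Since by hypothesis it is not (it is disconnected or contains a $2\times 2$ square), the Murnaghan--Nakayama sum is empty, and $\chi^{\pi/\tau}(c)=0$.

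The remaining ingredient is the standard congruence that for a $p$-element $g$ of a finite group and any integer-valued character $\psi$ one has $\psi(g)\equiv\psi(1)\pmod p$. This holds because the eigenvalues of $g$ in the underlying representation are $p^k$-th roots of unity for some $k$, and any such root $\zeta_{p^k}^m$ is congruent to $1$ modulo the prime ideal $(1-\zeta_{p^k})$ above $p$ in $\mathbf{Z}[\zeta_{p^k}]$ (since $1-\zeta_{p^k}^m$ is divisible by $1-\zeta_{p^k}$); hence $\psi(g)\equiv\psi(1)$ modulo that ideal, and so modulo $p$, both sides being rational integers. An $n$-cycle in $S_n$ has order $n=p^t$, hence is a $p$-element, so combining the three steps gives
\[
f_{\pi/\tau}=\chi^{\pi/\tau}(1)\equiv\chi^{\pi/\tau}(c)=0\pmod p,
\]
which is the assertion of the lemma.

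I do not expect a genuine obstacle: the argument is a short synthesis of classical facts, and the hypothesis enters only through the vanishing $\chi^{\pi/\tau}(c)=0$. One alternative, avoiding skew Specht modules, is to split into the two cases of the hypothesis: if $\pi/\tau$ is disconnected then $f_{\pi/\tau}$ is a multinomial coefficient $\binom{p^t}{n_1,\dots,n_k}$ (with $k\ge 2$ positive parts summing to $p^t$, hence divisible by $p$, since adding the $n_i$ in base $p$ to reach $p^t$ produces at least one carry) times a product of smaller tableaux counts; the connected case containing a $2\times 2$ square would then require a separate argument. The character-theoretic route above treats both at once, so that is the one I would carry out.
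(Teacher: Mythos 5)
Your argument is correct, and it takes a genuinely different route from the paper's. The paper first disposes of the disconnected case via the multinomial factorization (as in your sketched alternative), and then, for a connected shape containing a $2\times 2$ square, expands $f_{\pi/\tau}=\sum_{\nu\vdash p^t}f_\nu c^{\pi}_{\tau\nu}$ via Littlewood--Richardson coefficients, uses its Lemma~\ref{lem:combiningparts} to get $f_\nu\equiv\chi^{\nu}_{(p^t)}\pmod p$ (so only hook partitions $\nu$ could contribute), and kills the hook terms by taking $\pi$ minimal so that every hook of $\pi$ has length $<p^t$, forcing $c^{\pi}_{\tau\nu}=0$. You instead work directly with the skew character $\chi^{\pi/\tau}=\sum_\nu c^{\pi}_{\tau\nu}\chi^\nu$ of the skew Specht module: the skew Murnaghan--Nakayama rule gives $\chi^{\pi/\tau}(c)=0$ at a $p^t$-cycle $c$ precisely because $\pi/\tau$ is not a border strip, and the standard congruence $\psi(g)\equiv\psi(1)\pmod p$ for integer-valued characters at $p$-elements (valid since $(1-\zeta_{p^k})\cap\mathbf{Z}=(p)$) then yields $p\mid f_{\pi/\tau}$. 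The two proofs are really dual views of the single congruence $f_{\pi/\tau}\equiv\chi^{\pi/\tau}(c)\pmod p$: the paper shows the right-hand side vanishes term by term in the Littlewood--Richardson expansion, while you show it vanishes wholesale by the border-strip criterion. Your route is more uniform --- it needs no case split and no minimality/hook-length argument --- at the cost of invoking the skew Murnaghan--Nakayama rule and a general fact about $p$-elements; the paper's route stays entirely within the (non-skew) machinery it has already built, reusing its own Lemma~\ref{lem:combiningparts} in place of the algebraic-integer congruence.
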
 
\begin{proof}  First suppose that $\pi/\tau$ is disconnected, and is composed of $k\ge 2$ maximally connected skew shapes 
$S_1$, $\ldots$, $S_k$, with $|S_j| = s_j \ge 1$.  Then 
$$ 
f_{\pi/\tau} = \binom{p^t}{s_1, \ldots, s_k} f_{S_1} \cdots f_{S_k}, 
$$ 
is clearly a multiple of $p$. 

Now suppose that $\pi/\tau$ is a connected skew shape, but contains a $2\times 2$ square so that it is not a border strip of $\pi$.  
Since $f_{\pi/\tau}$ depends only on the shape $\pi/\tau$, we may assume that $\pi$ is minimal, having only as many rows and columns as 
needed for the skew shape $\pi/\tau$.  Then the maximal hook length of $\pi$ equals the number of border squares of $\pi$, which is strictly 
smaller than $|\pi/\tau| = p^{t}$ (since $\pi/\tau$ is not a border strip by assumption).   

It is a basic fact (see Section I.9 of~\cite{Macdonald1995}, for example --- the identity
below follows from equation~(9.1) of~\cite{Macdonald1995} by taking the
Hall inner product of both sides with the symmetric function $e_1^{p^t}$) that
\begin{equation*}
  f_{\pi/\tau}=\sum_{\nu\vdash p^{t}}f_{\nu}c^{\pi}_{\tau \nu},
\end{equation*}
where the sum is over partitions $\nu$ of $|\pi/\tau| = p^t$, $f_{\nu}=\chi^{\nu}_{(1,\dots,1)}$ is the degree of the irreducible character
corresponding to $\nu$ and the $c^{\pi}_{\tau \nu}$ are the Littlewood--Richardson coefficients (which are integers). By Lemma~\ref{lem:combiningparts},
$f_\nu\equiv\chi^{\nu}_{(p^{t})}\pmod{p}$, so that $p\mid f_\nu$ unless $\nu$ is a hook
of length $p^{t}$. Suppose now that $\nu$ is a hook of length $p^t$.  Here we use that the Littlewood--Richardson coefficient $c^{\pi}_{\tau \nu}$ 
is zero unless the Young diagram of the partition $\nu$ is contained in that of $\pi$ (see Section I.9 of~\cite{Macdonald1995} once again).  But 
all the hooks of $\pi$ have length $<p^t$, and therefore $\pi$ cannot contain a hook $\nu$ of length $p^t$.  Thus either $c^{\pi}_{\tau \nu}=0$ 
or $p | f_\nu$, and therefore the lemma follows. 
\end{proof}

\section{Preliminaries for the proof of Proposition~\ref{prop:manylargeparts}}\label{sec:prelims}

As in \cite{PeluseSoundararajan2022}, let $\widetilde{p}(k)$ denote the number of partitions of a
nonnegative integer $k$ into powers of $p$, with the convention that $\widetilde{p}(0)=1$. Denote by $F_p(t)$ the 
associated generating function
\begin{equation*}
  F_p(t):=\sum_{k=0}^{\infty}\widetilde{p}(k)e^{-k/t}=\prod_{j=0}^\infty\Big (1-e^{-p^j/t}\Big)^{-1},
\end{equation*}
where $t>0$ is a real number. We begin by recalling some estimates from our prior
work~\cite{PeluseSoundararajan2022}.

\begin{lemma}[Lemma~2 of~\cite{PeluseSoundararajan2022}]\label{lem:generatingfunctionbounds}
  When $0<t\leq 1$, we have $F_p(t)=O(1)$, and when $t\geq 1$, we have
  \begin{equation*}
    \frac{(\log{t})^2}{2\log{p}}+\frac{1}{2}\log{t}+O(1)\leq\log{F_p(t)}\leq\frac{(\log{t})^2}{2\log{p}}+\frac{1}{2}\log{t}+\frac{1}{8}\log{p}+O(1).
  \end{equation*}
\end{lemma}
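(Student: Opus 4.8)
The plan is to estimate $\log F_p(t)$ directly from the product representation, writing
\[
\log F_p(t)=-\sum_{j\ge 0}\log\bigl(1-e^{-p^j/t}\bigr)=\sum_{j\ge 0}g\!\left(\tfrac{p^j}{t}\right),\qquad g(x):=-\log(1-e^{-x}),
\]
and splitting the sum according to whether the argument $p^j/t$ is at most $1$ or larger; here $g$ is positive and decreasing on $(0,\infty)$. First, for $0<t\le 1$ every argument satisfies $p^j/t\ge p^j\ge 2^j$, so $g(p^j/t)\le g(2^j)$, and since $g(x)\le 2e^{-x}$ once $e^{-x}\le\frac12$, the sum $\sum_{j\ge 1}g(2^j)$ converges to an absolute constant while the single term $g(2^0)=-\log(1-e^{-1})$ is also an absolute constant; together with the trivial lower bound $F_p(t)\ge 1$ this gives $F_p(t)=\Theta(1)$, uniformly in $p$.

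For $t\ge 1$ I would set $J:=\lfloor\log t/\log p\rfloor\ge 0$, so that $p^J\le t<p^{J+1}$, and split the sum at $j=J$. For the tail, $t<p^{J+1}$ gives $p^j/t>p^{\,j-J-1}$ for every $j\ge J+1$, so by monotonicity $\sum_{j\ge J+1}g(p^j/t)\le\sum_{\ell\ge 0}g(p^\ell)=\log F_p(1)=O(1)$ by the case already treated. For the main range $0\le j\le J$ the argument $x=p^j/t$ lies in $(0,1]$, where one has the expansion $g(x)=-\log x+\tfrac{x}{2}+E(x)$ with $|E(x)|\le Cx^2$ and $C$ absolute (indeed $E(x)=-x^2/24+O(x^3)$ near $0$, and $g(x)+\log x$ is analytic on $[0,1]$). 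Summing the $\tfrac{x}{2}$ and $E(x)$ pieces over $0\le j\le J$ produces the geometric sums $\tfrac{1}{2t}\sum_{j\le J}p^j$ and a quantity bounded by $\tfrac{1}{t^2}\sum_{j\le J}p^{2j}$, each of which is $O(1)$ uniformly in $p$ and $t$ because $p^J\le t$. What remains is the explicit sum
\[
\sum_{j=0}^{J}\bigl(-\log(p^j/t)\bigr)=(J+1)\log t-\tfrac{J(J+1)}{2}\log p .
\]

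The conclusion is then pure algebra: writing $\log t=(J+\beta)\log p$ with $\beta=\{\log t/\log p\}\in[0,1)$, a short computation gives
\[
(J+1)\log t-\tfrac{J(J+1)}{2}\log p=\frac{(\log t)^2}{2\log p}+\frac12\log t+\frac{\beta(1-\beta)}{2}\log p ,
\]
so that $\log F_p(t)=\dfrac{(\log t)^2}{2\log p}+\dfrac12\log t+\dfrac{\beta(1-\beta)}{2}\log p+O(1)$; since $0\le\beta(1-\beta)\le\tfrac14$, the extra term lies in $[0,\tfrac18\log p]$, which is exactly the asserted pair of inequalities. The argument is essentially a calculation, and I expect the only real obstacle to be the bookkeeping of error terms --- making sure every $O(1)$ is genuinely bounded independently of both $p$ and $t$, in particular near the junction $j\approx J$ where $p^j/t$ is close to $1$; this is precisely why we switch there from the Taylor estimate on $(0,1]$ to the comparison with $F_p(1)$, and no analytic subtlety arises.
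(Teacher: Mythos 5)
Your proof is correct, and since this lemma is only quoted here from the earlier paper (no proof is given in the present text), the natural benchmark is whether your argument cleanly recovers the stated bounds — it does, and by what is surely the intended route: splitting $\sum_j g(p^j/t)$ at $p^J\le t<p^{J+1}$, using $g(x)=-\log x+O(x)$ on the main range and geometric decay on the tail. The identity $(J+1)\log t-\tfrac{J(J+1)}{2}\log p=\tfrac{(\log t)^2}{2\log p}+\tfrac12\log t+\tfrac{\beta(1-\beta)}{2}\log p$ checks out, and the appearance of $\tfrac18\log p$ as $\max_\beta\beta(1-\beta)/2$ is exactly where that constant in the lemma comes from; all error terms are uniform in $p\ge 2$ as you claim.
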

More precise results are known for fixed primes $p$, as partitions into prime powers have
been studied extensively since the work of Mahler~\cite{Mahler1940} and de
Bruijn~\cite{deBruijn1948}. We will only require the estimates of
Lemma~\ref{lem:generatingfunctionbounds}, which are cruder but uniform in $p$.

Given a partition $\mu$ of $k$ into powers of $p$, let $\widetilde{\mu}$ denote the
partition obtained by repeatedly replacing every occurrence of $p^r$ parts of the same
size $p^j$ by $p^{r-1}$ parts of size $p^{j+1}$ until no part appears more than
$p^{r}-1$ times. For every nonnegative integer $s$, define $\widetilde{p}(k;s)$ to be the
number of partitions $\mu$ of $k$ into powers of $p$ such that $\widetilde{\mu}$ does not 
contain (at least) $p^{r-1}$ parts of the same size $p^j$ for any $j \ge s$.  The second lemma of this section gives a useful lower bound for
the difference between $\widetilde{p}(k)$ and $\widetilde{p}(k;s)$.

\begin{lemma}\label{lem:partitiondifference}
  For all $s\geq 2$ and $k\geq p^{r+s-1}(1+4/s)$, we have
  \begin{equation*}
    \widetilde{p}(k)-\widetilde{p}(k;s)\geq\frac{p^{s(s-1)/2}}{(s-1)^{s-1}}.
  \end{equation*}
\end{lemma}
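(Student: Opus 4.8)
The plan is to construct, for each partition $\eta$ of $N:=\lfloor k/p^{r-1}\rfloor$ into powers of $p$, a companion partition $\mu(\eta)$ of $k$ into powers of $p$ whose reduction $\widetilde{\mu(\eta)}$ is forced to contain at least $p^{r-1}$ parts of a common size $p^{j}$ with $j\ge s$. Every such $\mu(\eta)$ is counted by $\widetilde p(k)-\widetilde p(k;s)$, and the assignment $\eta\mapsto\mu(\eta)$ will be injective, so the lemma will reduce to the lower bound $\widetilde p(N)\ge p^{s(s-1)/2}/(s-1)^{s-1}$ (valid for any $N\ge p^{s}$), which I will then prove by an elementary count.

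The construction rests on a structural fact about the reduction operation. Given a partition $\eta$ of an integer $V$ into powers of $p$, let $p^{r-1}\eta$ denote the partition in which each part $p^i$ appears with $p^{r-1}$ times its multiplicity in $\eta$. I claim that $\widetilde{p^{r-1}\eta}$ equals $p^{r-1}\beta_p(V)$, where $\beta_p(V)$ is the base-$p$ expansion of $V$ viewed as a partition. The reason is that every elementary reduction move subtracts $p^{r}=p\cdot p^{r-1}$ from one multiplicity and adds $p^{r-1}$ to another, so starting from $p^{r-1}\eta$ all multiplicities stay divisible by $p^{r-1}$ throughout the process; dividing every multiplicity by $p^{r-1}$ then identifies the $p^{r}$-reduction of $p^{r-1}\eta$ with the ordinary $p$-reduction of $\eta$ (the $r=1$ case), whose output is the base-$p$ expansion of $V$, and the two processes halt simultaneously because a multiplicity divisible by $p^{r-1}$ is at most $p^{r}-1$ precisely when its quotient by $p^{r-1}$ is at most $p-1$. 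Now write $k=p^{r-1}N+\rho$ with $0\le\rho<p^{r-1}$ and set $\mu(\eta):=(p^{r-1}\eta)\sqcup\{1^{\rho}\}$, a partition of $k$. Since $\rho\le p^{r-1}-1$ and $p^{r-1}(p-1)+(p^{r-1}-1)=p^{r}-1$, appending these $\rho$ parts equal to $1$ does not change the carry produced at the smallest position, so (reducing smallest part first, then the next, and so on) the reduction of $\mu(\eta)$ proceeds exactly as that of $p^{r-1}\eta$ at every part of size $\ge p$; hence $\widetilde{\mu(\eta)}$ contains exactly $p^{r-1}d_i$ parts of size $p^i$ for every $i\ge1$, where $d_i$ is the $i$-th base-$p$ digit of $N$.

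Since $k\ge p^{r+s-1}(1+4/s)\ge p^{r-1}p^{s}$ we have $N\ge p^{s}$, so some base-$p$ digit $d_{T}$ of $N$ with $T\ge s$ is nonzero; by the previous paragraph $\widetilde{\mu(\eta)}$ then has at least $p^{r-1}d_{T}\ge p^{r-1}$ parts of size $p^{T}$, whence $\mu(\eta)$ is not among the partitions counted by $\widetilde p(k;s)$. Distinct $\eta$ give distinct $\mu(\eta)$ (recover $\eta$ by deleting $\rho$ parts equal to $1$ and dividing all multiplicities by $p^{r-1}$), so $\widetilde p(k)-\widetilde p(k;s)\ge\widetilde p(N)$. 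It remains to bound $\widetilde p(N)$ from below for $N\ge p^{s}$: for each tuple $(e_1,\dots,e_{s-1})$ of integers with $0\le e_i\le\lfloor p^{s-i}/(s-1)\rfloor$ one has $\sum_{i=1}^{s-1}e_ip^{i}\le\sum_{i=1}^{s-1}\frac{p^{s-i}}{s-1}\,p^{i}=p^{s}\le N$, so $e_0:=N-\sum_{i=1}^{s-1}e_ip^{i}\ge0$ and $(e_0,e_1,\dots,e_{s-1})$ is a partition of $N$ into powers of $p$; different tuples give different partitions, and therefore
\[
\widetilde p(N)\ \ge\ \prod_{i=1}^{s-1}\Bigl(1+\bigl\lfloor p^{s-i}/(s-1)\bigr\rfloor\Bigr)\ >\ \prod_{i=1}^{s-1}\frac{p^{s-i}}{s-1}\ =\ \frac{p^{(s-1)+(s-2)+\cdots+1}}{(s-1)^{s-1}}\ =\ \frac{p^{s(s-1)/2}}{(s-1)^{s-1}},
\]
which combined with $\widetilde p(k)-\widetilde p(k;s)\ge\widetilde p(N)$ finishes the proof.

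The one step that needs genuine care is the structural identity $\widetilde{p^{r-1}\eta}=p^{r-1}\beta_p(V)$, together with the observation that appending $\rho<p^{r-1}$ ones perturbs only the smallest part of the reduced partition; everything else is bijective bookkeeping and a one-line product estimate. In writing this out I would be most careful to check that the $p^{r}$-reduction of $p^{r-1}\eta$ and the $p$-reduction of $\eta$ really proceed in lockstep — that an elementary move is available for one exactly when it is available for the other — since that is the heart of the argument. (Note that the weaker hypothesis $k\ge p^{r+s-1}$ already suffices here.)
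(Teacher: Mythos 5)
Your proof is correct, and it takes a genuinely different route from the paper's. The paper works directly with partitions of $k$ having $a_i$ copies of $p^i$ for $1\le i\le s-1$ (with $0\le a_i\le p^{s-i}/(s-1)$) and $1$'s for the rest, and verifies that these are not counted by $\widetilde{p}(k;s)$ by contradiction: reduction preserves all multiplicities modulo $p^{r-1}$, so if $\widetilde{\mu}$ had no part $p^j$ with $j\ge s$ repeated at least $p^{r-1}$ times, the admissible multiplicities in $\widetilde{\mu}$ would force $k=|\widetilde{\mu}|<p^{r+s-1}(1+4/s)$ --- which is the only place the factor $(1+4/s)$ in the hypothesis is used. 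You instead build partitions whose multiplicities are multiples of $p^{r-1}$ (plus $\rho<p^{r-1}$ extra ones), which makes the reduction computable in closed form: as you argue, the quotient-by-$p^{r-1}$ dynamics is exactly the $r=1$ dynamics on $\eta$, a move is available and halting occurs on one side exactly when it does on the other (the $\rho$ extra ones never change availability or termination at size $1$ because $\rho\le p^{r-1}-1$), and the $r=1$ process can only terminate at the base-$p$ digits of $N=\lfloor k/p^{r-1}\rfloor$; hence $\widetilde{\mu(\eta)}$ has exactly $p^{r-1}d_i$ parts of size $p^i$ for $i\ge 1$, and since this identifies every terminal state you do not even need confluence of the reduction. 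Your route buys the cleaner and stronger intermediate inequality $\widetilde{p}(k)-\widetilde{p}(k;s)\ge\widetilde{p}(\lfloor k/p^{r-1}\rfloor)$ under the weaker hypothesis $k\ge p^{r+s-1}$, while the paper's route needs nothing about the reduction dynamics beyond the mod-$p^{r-1}$ invariance of multiplicities; the concluding product estimate over tuples is essentially the same elementary count in both arguments.
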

\begin{proof}
  We will construct at least $p^{s(s-1)/2}/(s-1)^{s-1}$ partitions of $k$ counted in
  $\widetilde{p}(k)$ but not in $\widetilde{p}(k;s)$. For each $1\leq i\leq s-1$, pick an integer
  $a_i$ in the range
  \begin{equation*}
    0\leq a_i\leq\frac{p^{s-i}}{s-1}.
  \end{equation*}
  Each choice of $a_1,\dots,a_{s-1}$ gives a partition $\mu$ counted in $\widetilde{p}(k)$ by
  using $a_i$ copies of $p^i$ for $1\leq i\leq s-1$ and $k-\sum_{i=1}^{s-1}a_ip^i$ copies
  of $1$. The number of such partitions is
  \begin{equation*}
    \prod_{i=1}^{s-1}\Big\lceil\frac{p^{s-i}}{s-1} \Big\rceil\geq\prod_{i=1}^{s-1}\frac{p^{s-i}}{s-1}=\frac{p^{s(s-1)/2}}{(s-1)^{s-1}}.
  \end{equation*}
  Note that if $i>s-\log (s-1)/\log p$, then $a_i$ must be zero, so that all of these
  partitions have largest part at most $\frac{p^{s}}{s-1}$.

  We must check that each such $\mu$ is not counted in $\widetilde{p}(k;s)$;  that is, that the
  corresponding $\widetilde{\mu}$ contains at least $p^{r-1}$ copies of some part $p^j$ with $j\geq s$. Suppose that 
  this is not the case.  Notice that, by construction, the number of times any part appears in $\mu$ 
  is congruent modulo $p^{r-1}$ to the number of times it appears in ${\widetilde \mu}$.  
  Since no part can appear more than $p^{r}-1$ times in
  $\widetilde{\mu}$, it follows that any part that appears fewer than $p^{r-1}$ times or more
  than $p^r-p^{r-1}$ times in $\widetilde{\mu}$ must have appeared in the original partition
  $\mu$.   Since all the parts of $\mu$ are below $p^{s}/(s-1)$, we conclude that ${\widetilde \mu}$ 
  can contain (i) at most $p^{r}-1$ copies of any part $p^j$ with $p^j \le p^{s}/(s-1)$, (ii) at most 
  $p^r - p^{r-1}$ copies of any part $p^j$ with $p^s/(s-1) < p^j \le p^{s-1}$, and (iii) no parts of size $p^j$ 
  with $j \ge s$.  But these constraints imply that 
 \begin{align*} 
  k = |{\widetilde \mu}| &\le (p^r -1) \sum_{p^{j} \le p^s/(s-1)} p^j + (p^r-p^{r-1}) \sum_{p^{s}/(s-1) <p^j \le p^{s-1}} p^j \\
  &< (p^{r-1}-1) \frac{p^s}{(s-1)} \Big(1- \frac 1p\Big)^{-1}  + (p^r -p^{r-1}) p^{s-1} \Big(1- \frac 1p\Big)^{-1}  < p^{r+s-1} \Big(1 +\frac 4s\Big), 
  \end{align*}
  which contradicts our assumption on the size of $k$.  
\end{proof}

\section{Proof of Proposition~\ref{prop:manylargeparts}}\label{sec:finalproof}

Let $\mathcal{L}$ be a set of positive integers coprime to $p$, and define
$p(n;\mathcal{L},s)$ to be the number of partitions $\mu$ of $n$ for which $\widetilde{\mu}$
contains fewer than $p^{r-1}$ parts of the same size $\ell p^j$ for every $\ell\in\mathcal{L}$
and $j\geq s$. We will prove Proposition~\ref{prop:manylargeparts} by obtaining an 
upper bound for $p(n;\mathcal{L},s)$ for well-chosen $\mathcal{L}$ and $s$.

\begin{lemma} \label{lem9.1}  Suppose that $n$ is large and $p^r \le 10^{-3} \log n/\log \log n$.  
Put 
\begin{equation} 
\label{9.1} 
x= \frac{\sqrt{6n}}{\pi}, \qquad s = \Big \lfloor \frac{\log \sqrt{n}}{ep^r}\Big\rfloor, 
\end{equation}
and let ${\mathcal L}$ be the set of integers in the interval $[L, L+x/p^{r+s-1}]$ that are 
coprime to $p$, where $L$ is a parameter lying in the range 
\begin{equation} 
\label{9.2} 
\frac{\sqrt{6n}}{2 \pi p^{r+s-1}} \le L \le \Big(1 +\frac{1}{5p^r}\Big) \frac{\sqrt{6n}}{2\pi p^{r+s-1}} \log n. 
\end{equation} 
Then 
$$ 
p(n;{\mathcal L},s) \ll p(n) n^{\frac 34} \exp(- n^{\frac{1}{16 p^r}}). 
$$
\end{lemma}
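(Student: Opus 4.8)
The plan is to reduce everything to the ``$p$‑power part'' structure of a partition and then apply Rankin's trick. Given a partition $\mu$ of $n$, for each $\ell$ coprime to $p$ let $\mu^{(\ell)}$ be the partition into powers of $p$ whose multiplicity of $p^j$ equals the multiplicity of $\ell p^j$ in $\mu$; then $\sum_{p\nmid\ell}\ell|\mu^{(\ell)}|=n$. The operation $\mu\mapsto\widetilde\mu$ only ever merges parts lying in a single orbit $\{\ell,\ell p,\ell p^2,\dots\}$, and on each orbit it is exactly the map on partitions into powers of $p$ recalled before Lemma~\ref{lem:partitiondifference}; hence $\mu$ is counted by $p(n;\mathcal L,s)$ precisely when $\mu^{(\ell)}$ is counted by $\widetilde p(|\mu^{(\ell)}|;s)$ for every $\ell\in\mathcal L$. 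Writing $k_\ell=|\mu^{(\ell)}|$ this gives
\[
p(n;\mathcal L,s)=\sum_{\substack{(k_\ell)_{p\nmid\ell}\\ \sum_\ell\ell k_\ell=n}}\ \prod_{\ell\in\mathcal L}\widetilde p(k_\ell;s)\prod_{\substack{p\nmid\ell\\ \ell\notin\mathcal L}}\widetilde p(k_\ell).
\]
Multiplying through by $e^{n/x}=e^{n/x}q^{-\sum\ell k_\ell}$ on the support $\sum\ell k_\ell=n$ (with $q=e^{-1/x}$) and then dropping the constraint, the sum factors over $\ell$:
\[
p(n;\mathcal L,s)\le e^{n/x}\prod_{\ell\in\mathcal L}G_p(x/\ell;s)\prod_{\substack{p\nmid\ell\\ \ell\notin\mathcal L}}F_p(x/\ell),\qquad G_p(t;s):=\sum_{k\ge0}\widetilde p(k;s)e^{-k/t}.
\]

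Next I would separate out the ``partition function'' part. Since $\prod_{p\nmid\ell}F_p(x/\ell)=\prod_{m\ge1}(1-e^{-m/x})^{-1}=\sum_{m\ge0}p(m)e^{-m/x}$, the estimate becomes
\[
p(n;\mathcal L,s)\le\Big(e^{n/x}\sum_{m\ge0}p(m)e^{-m/x}\Big)\prod_{\ell\in\mathcal L}\frac{G_p(x/\ell;s)}{F_p(x/\ell)}.
\]
For the first factor, the asymptotic $p(m)\asymp m^{-1}\exp(\tfrac{2\pi}{\sqrt6}\sqrt m)$ together with Laplace's method — the exponent $\tfrac{2\pi}{\sqrt6}\sqrt m-m/x$ is maximised at $m=n$ exactly because $x=\sqrt{6n}/\pi$, and its second derivative there is of order $n^{-3/2}$ — gives $\sum_m p(m)e^{-m/x}\ll n^{-1/4}\exp(\tfrac{\pi}{\sqrt6}\sqrt n)$; since $e^{n/x}=\exp(\tfrac{\pi}{\sqrt6}\sqrt n)$, the first factor is $\ll n^{-1/4}\exp(\tfrac{2\pi}{\sqrt6}\sqrt n)\ll n^{3/4}p(n)$. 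It therefore remains to prove $\prod_{\ell\in\mathcal L}G_p(x/\ell;s)/F_p(x/\ell)\le\exp(-n^{1/16p^r})$.

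For this, write $\delta_\ell:=1-G_p(x/\ell;s)/F_p(x/\ell)\in(0,1)$, so that $\prod_{\ell\in\mathcal L}(1-\delta_\ell)\le\exp(-\sum_{\ell\in\mathcal L}\delta_\ell)$ and it suffices to show $\sum_{\ell\in\mathcal L}\delta_\ell\ge n^{1/16p^r}$. The key lower bound on a single $\delta_\ell$ comes from a quantitative version of Lemma~\ref{lem:partitiondifference}: a partition of $k$ into powers of $p$ having at least $p^{r+s-1}$ copies of $1$ is not counted by $\widetilde p(k;s)$, since merging the $1$'s carries $p^{r-1}$ copies successively up through levels $1,\dots,s$; removing $p^{r+s-1}$ ones therefore injects these partitions into all partitions of $k-p^{r+s-1}$, whence $\widetilde p(k)-\widetilde p(k;s)\ge\widetilde p(k-p^{r+s-1})$ for all $k$. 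Consequently, with $t=x/\ell$,
\[
F_p(t)-G_p(t;s)=\sum_k\big(\widetilde p(k)-\widetilde p(k;s)\big)e^{-k/t}\ge e^{-p^{r+s-1}/t}F_p(t),\qquad\text{i.e.}\qquad \delta_\ell\ge e^{-p^{r+s-1}\ell/x}.
\]
One then sums over $\ell\in\mathcal L$: all these $\ell$ lie in an interval of length $x/p^{r+s-1}$, so $p^{r+s-1}\ell/x$ varies by at most $1$ across $\mathcal L$, and $|\mathcal L|\gg x/p^{r+s-1}$ since $x/p^{r+s-1}$ is at least a fixed power of $n$ (using $p^{r+s-1}\le n^{1/(2e^2)}\log n$) and a positive proportion of integers are coprime to $p$. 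Hence $\sum_{\ell\in\mathcal L}\delta_\ell\gg (x/p^{r+s-1})\,e^{-p^{r+s-1}L/x}$, and one checks — substituting the definitions~\eqref{9.1} of $x$ and $s$ and the admissible range~\eqref{9.2} of $L$, which is precisely what keeps $p^{r+s-1}L/x$ under control — that this quantity exceeds $n^{1/16p^r}$.

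The main obstacle is this final step: the bookkeeping showing $\sum_{\ell\in\mathcal L}\delta_\ell\ge n^{1/16p^r}$, where the loss $x/p^{r+s-1}$ in the size of $\mathcal L$ must be balanced against the decay $e^{-p^{r+s-1}L/x}$ and against the target exponent $1/16p^r$. This is exactly where the precise shapes of $x$, $s$, and the allowed window for $L$ in the hypotheses are used, and making the constants (not merely the orders of magnitude) line up is the delicate part; the orbit decomposition, Rankin's trick, and the estimate of the first factor are routine by comparison.
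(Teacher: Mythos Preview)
Your setup — the orbit decomposition, Rankin's trick with $z=e^{-1/x}$, and the reduction to bounding $\sum_{\ell\in\mathcal L}\delta_\ell$ from below — matches the paper's approach exactly. Your alternative to Lemma~\ref{lem:partitiondifference}, namely the injection $\tilde p(k)-\tilde p(k;s)\ge\tilde p(k-p^{r+s-1})$, is also correct (one can check it by observing that the merging process, read on the quotients $\lfloor b_j/p^{r-1}\rfloor$, is precisely base-$p$ carrying, so $\ge p^{r+s-1}$ ones force a nonzero digit at some level $\ge s$).

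However, the final step fails, and not for delicate reasons: the orders of magnitude are wrong, not just the constants. At the top of the allowed range for $L$, namely $L=(1+\tfrac{1}{5p^r})\tfrac{x}{2p^{r+s-1}}\log n$, your bound gives
\[
\sum_{\ell\in\mathcal L}\delta_\ell \;\gg\; \frac{x}{p^{r+s-1}}\,e^{-p^{r+s-1}L/x}
\;=\; \frac{x}{p^{r+s-1}}\,n^{-\frac12(1+\frac{1}{5p^r})}
\;\asymp\; \frac{1}{p^{r+s-1}}\,n^{-\frac{1}{10p^r}},
\]
which is $o(1)$ and so cannot exceed $n^{1/(16p^r)}$. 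And it is precisely this upper portion of the window for $L$ that matters for Proposition~\ref{prop:manylargeparts}: one needs $L$ of size roughly $(\log n)\cdot x/p^{r+s-1}$ to force parts $m_j$ with $p^{r-1}m_j$ above the threshold $(1+\tfrac{1}{6p^r})\tfrac{\sqrt 6}{2\pi}\sqrt n\log n$.

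What goes wrong is that your injection, while valid, throws away the main term: $\tilde p(k-p^{r+s-1})$ is tiny for $k$ just above $p^{r+s-1}$, whereas the paper's Lemma~\ref{lem:partitiondifference} supplies a \emph{large constant} lower bound $p^{s(s-1)/2}/(s-1)^{s-1}$ uniformly for such $k$. This factor, of size $p^{\Theta(s^2)}$, is comparable to $F_p(x/\ell)$ (also $p^{\Theta(s^2)}$ by Lemma~\ref{lem:generatingfunctionbounds}); after the near-cancellation one is left with a net gain of order $e^s\approx n^{1/(2ep^r)}$. That is exactly the margin that converts $\log\Delta$ from roughly $-\tfrac{1}{5p^r}\log\sqrt n$ (your situation) to roughly $(\tfrac1e-\tfrac15)\tfrac{1}{p^r}\log\sqrt n>\tfrac{1}{16p^r}\log n$. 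So Lemma~\ref{lem:partitiondifference} is not a convenience but the crux of the estimate, and your cleaner substitute does not carry enough weight.
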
 

Before proving the lemma, let us see how Proposition~\ref{prop:manylargeparts} would follow. 
Choose $r$ distinct values $L_j$ (with $1\le j\le r$) all in the range 
$$ 
\Big( 1+ \frac{1}{6p^r}\Big) \frac{\sqrt{6n}}{2\pi p^{r+s-1}} \log n \le L_j \le \Big( 1+ \frac {1}{5p^r}\Big) \frac{\sqrt{6n}}{2\pi p^{r+s-1}} \log n, 
$$ 
such that the corresponding sets ${\mathcal L_j}$ are all disjoint.  A partition $\mu$ for which ${\widetilde \mu}$ 
does not contain $r$ distinct parts $m_1$, $\ldots$, $m_r$ each appearing at least $p^{r-1}$ times and suitably large 
as desired in the proposition, must be counted among some $p(n;{\mathcal L}_j,s)$ with $1\le j\le r$.  Thus by Lemma \ref{lem9.1} 
the number of such bad partitions $\mu$ is 
$$ 
\le \sum_{j=1}^{r} p(n;{\mathcal L}_j,s) \ll r p(n)n^{\frac 34} \exp(- n^{\frac 1{16p^r}}) \ll p(n) n \exp(- n^{\frac 1{16p^r}}) \ll p(n) \exp(-n^{\frac{1}{20p^r}}),  
$$ 
as claimed.

\begin{proof}[Proof of Lemma \ref{lem9.1}] Consider the process of going from a partition $\mu$ to $\widetilde{\mu}$ by combining $p^{r}$
parts of the same size $m$ into $p^{r-1}$ parts of size $pm$. Suppose that $\ell$ is coprime
to $p$, and that the sum of all parts of the form $\ell p^j$ appearing in $\mu$ equals
$\ell k$. Restricting our attention to these parts, we may think of $\mu$ as giving rise
to a partition of $k$ into powers of $p$, and then $\widetilde{\mu}$ correspondingly gives a
partition of $k$ into powers of $p$ obtained by repeatedly combining $p^r$ parts of size
$p^j$ into $p^{r-1}$ parts of size $p^{j+1}$. It follows that $p(n;\mathcal{L},s)$ is the
coefficient of $z^n$ in the generating function
\begin{equation*}
  \prod_{\substack{\ell\notin\mathcal{L} \\ (\ell,p)=1}}\prod_{j=0}^\infty \Big(1-z^{\ell
      p^j}\Big)^{-1}\prod_{\ell\in\mathcal{L}}\Big( \sum_{k=0}^{\infty}\widetilde{p}(k;s)z^{\ell
    k}\Big),
\end{equation*}
which equals
\begin{equation*}
  \prod_{i=1}^\infty\left(1-z^i\right)^{-1}\prod_{\ell\in\mathcal{L}}\Big(\frac{\sum_{k=0}^\infty\widetilde{p}(k;s)z^{\ell
      k}}{\sum_{k=0}^\infty\widetilde{p}(k)z^{\ell k}}\Big).
\end{equation*}
Since all of the coefficients in the generating function for $p(n;\mathcal{L},s)$ are
nonnegative, we must have, for any $0< z<1$, 
\begin{equation}\label{eq:generatingfunctionbound}
  p(n;\mathcal{L},s)\leq  \frac{1}{z^n}\prod_{i=1}^\infty\left(1-z^i\right)^{-1}\prod_{\ell\in\mathcal{L}}\Big(\frac{\sum_{k=0}^\infty\widetilde{p}(k;s)z^{\ell
      k}}{\sum_{k=0}^\infty\widetilde{p}(k)z^{\ell k}}\Big). 
\end{equation}

Recall that $x= \sqrt{6n}/\pi$, and take $z= e^{-1/x}$ in the bound~\eqref{eq:generatingfunctionbound}.  
Then, by the asymptotic formula for
the partition function and basic estimates for the generating function of the number of partitions
(see Section VIII.6 of~\cite{FlajoletSedgewick2009}), we obtain
\begin{equation}
\label{9.4} 
  p(n;\mathcal{L},s)\ll n^{3/4}p(n)\prod_{\ell\in \mathcal{L}}\Big(\frac{\sum_{k=0}^\infty\widetilde{p}(k;s)z^{\ell
      k}}{\sum_{k=0}^\infty\widetilde{p}(k)z^{\ell k}}\Big) \ll n^{3/4}p(n)\exp(-\Delta),
\end{equation}
where
\begin{equation*}
  \Delta:=\sum_{\ell\in\mathcal{L}}\frac{1}{F_p(x/\ell)}\sum_{k=0}^\infty(\widetilde{p}(k)-\widetilde{p}(k;s))e^{-\ell k/x}.
\end{equation*}

Our work so far applies to any set ${\mathcal L}$ of integers that are coprime to $p$, and we now proceed to the situation at hand. 
The lower bound on $L$ and our choice of $x$ give, for all $\ell \in {\mathcal L}$,  the bound 
\begin{equation*}
  F_p\Big(\frac{x}{\ell}\Big) \leq F_p\Big(\frac{x}{L}\Big)\leq F_p\Big( \frac{2p^{r+s-1}}{\log{n}}\Big).
\end{equation*}
From this estimate, our choice of ${\mathcal L}$, and Lemma~\ref{lem:partitiondifference} it follows  that
\begin{equation}\label{eq:Deltalowerbound}
  \Delta\geq \frac{1}{F_p(2 p^{r+s-1}/\log{n})}\sum_{\substack{L\leq\ell\leq
      L+x/p^{r+s-1} \\ (\ell,p)=1}}\ \ \sum_{k\geq
    p^{r+s-1}(1+4/s)}\frac{p^{s(s-1)/2}}{(s-1)^{s-1}}e^{-\ell k/x}. 
\end{equation}

For $\ell$ in the range $L\le \ell \le L+x/p^{r+s-1}$, we have
\begin{align*}
  \sum_{k\geq p^{r+s-1}(1+4/s)}e^{-\ell k/x} &\geq \exp\Big(-\frac{\ell
                                               p^{r+s-1}}{x}\Big(1+\frac{4}{s}\Big)\Big) \frac{e^{-\ell/x}}{1- e^{-\ell/x}} 
  \\
  &\geq \frac{x}{2 L}\exp\Big(-\Big(\frac{L p^{r+s-1}}{x}+1\Big)\Big(1+\frac{4}{s}\Big)\Big).
\end{align*}
Inserting this into the right-hand side of~\eqref{eq:Deltalowerbound} and 
noting that (since $p^{r+s-1}$ is small in comparison to $x$)
$$
|{\mathcal L}| \ge \Big(1-\frac 1p\Big) \frac{ x}{p^{r+s-1} } - 2 \ge \frac{x}{3p^{r+s-1}},
$$  
we obtain (using our choice of $x$ and the range for $L$)
\begin{align*}
  \Delta &\geq \frac{1}{F_p(2 p^{r+s-1}/\log{n})}\cdot
           \frac{p^{s(s-1)/2}}{(s-1)^{s-1}}\cdot\frac{x}{3p^{r+s-1}}\cdot\frac{x}{2L} \exp\Big(-\Big(\frac{L
           p^{r+s-1}}{x}+1\Big)\Big(1+\frac{4}{s}\Big)\Big)
  \\
  &\geq \frac{1}{6}\frac{1}{F_p(2 p^{r+s-1}/\log{n})}\cdot
           \frac{p^{s(s-1)/2}}{(s-1)^{s-1}}\cdot\frac{x}{\log{n}}\cdot \exp\Big(-\Big(\frac{L
    p^{r+s-1}}{x}+1\Big)\Big(1+\frac{4}{s}\Big)\Big).
\end{align*}




Using Lemma~\ref{lem:generatingfunctionbounds} and the bound $p^r \le \log \sqrt{n}$, it follows that
\begin{align*}
  \log{F_p\Big(\frac{2p^{r+s-1}}{\log{n}}\Big)}
  &\leq
                               \frac{1}{2\log{p}}\Big(\log\frac{p^{r+s-1}}{\log{\sqrt{n}}}\Big)^2+\frac{1}{2}\log\frac{p^{r+s-1}}{\log{\sqrt{n}}}+\frac{1}{8}\log{p}+O(1)
  \\
  &\leq \frac{1}{2\log{p}}\Big(\log\frac{p^{r+s-1}}{\log{\sqrt{n}}}\Big)^2+\frac{s}{2}\log{p}+O(1).
\end{align*}
Therefore 
\begin{align*}
  \log{\frac{p^{s(s-1)/2}}{F_p(2 p^{r+s-1}/\log{n})(s-1)^{s-1}}} &\geq
                                                                   \frac{s^2}{2}\log{p}-\frac{1}{2\log{p}}\Big(\log\frac{p^{r+s-1}}{\log{\sqrt{n}}}\Big)^2-s\log{ps}+O(1)
  \\
  &\geq s\log{\frac{\log{\sqrt{n}}}{p^rs}}-\frac{\left(\log\log{\sqrt{n}}\right)^2}{2\log{p}}+O(1).
\end{align*}

Recalling our choice of $s$, we conclude that 
\begin{align*}
 \log \Delta &\ge  s\log{\frac{\log{\sqrt{n}}}{p^rs}}+\log{\sqrt{n}}-\frac{(\log\log{\sqrt{n}})^2}{2\log{p}}-\log\log{n}-\frac{L
    p^{r+s-1}}{x}\Big(1+\frac{4}{s}\Big)+O(1) \\ 
&\geq \Big(1+\frac{1}{ep^r}\Big)\log\sqrt{n}-\frac{L p^{r+s-1}}{x}-\log\log{n}-\frac{\left(\log\log{\sqrt{n}}\right)^2}{2\log{p}}+O(1)\\
&\ge \Big( \frac{1}{ep^r} - \frac{1}{5p^r}\Big) \log \sqrt{n} - (\log \log n)^2 + O(1) \ge \frac{\log n}{15p^r} - (\log \log n)^2 +O(1), 
\end{align*}
upon using the upper bound on $L$ in \eqref{9.2}.  In the range $p^r\leq 10^{-3}\log{n}/(\log\log{n})^2$ we find 
$$ 
\log \Delta \ge \frac{\log n}{16 p^r} + O(1), 
$$ 
which when used in \eqref{9.4} yields the lemma. 
 \end{proof}

\bibliographystyle{plain}
\bibliography{bib}

\end{document}